\tikzstyle{main node}=[circle, draw,
\tikzstyle{code node}=[circle, draw, fill=lightgray,
\tikzstyle{shift node}=[circle, draw, fill=red,
\tikzstyle{square main node}=[shape=rectangle, draw,
\tikzstyle{square main node2}=[shape=rectangle, draw,
\tikzstyle{square code node}=[shape=rectangle, draw, fill=lightgray,
\tikzstyle{square code node2}=[shape=rectangle, draw, fill=lightgray,
\tikzstyle{small node}=[circle, draw,
\newtheorem{theorem}{Theorem}
\newtheorem{proposition}[theorem]{Proposition}
\newtheorem{lemma}[theorem]{Lemma}
\newtheorem{corollary}[theorem]{Corollary}
\newcommand{\ID}{\gamma^{\text{\tiny{ID}}}}
\newcommand{\IDt}{\gamma_t^{\text{\tiny{ID}}}}
\begin{document}

\title{Revisiting and improving upper bounds for identifying codes\footnote{A shorter version of the article has been presented at the 11th International Colloquium on Graph Theory and combinatorics, Montpellier, 2022.}}
\author{Florent Foucaud\footnote{\noindent Université Clermont-Auvergne, CNRS, Mines de Saint-Étienne, Clermont-Auvergne-INP, LIMOS, 63000 Clermont-Ferrand, France. Research supported by the French government IDEX-ISITE initiative 16-IDEX-0001 (CAP 20-25) and by the ANR project GRALMECO (ANR-21-CE48-0004).}~\footnote{Univ. Orléans, INSA Centre Val de Loire, LIFO EA 4022, F-45067 Orléans Cedex 2, France.}
\and Tuomo Lehtil\"a\footnote{\noindent Univ Lyon, Universit\'e Claude Bernard, CNRS, LIRIS - UMR 5205, F69622, France. Research supported by the Finnish Cultural Foundation.}~\footnote{Department of mathematics and statistics, University of Turku, Finland.  Research supported by the Academy of Finland grant 338797.}
}

\maketitle

\begin{abstract}
An identifying code $C$ of a graph $G$ is a dominating set of $G$ such that any two distinct vertices of $G$ have distinct closed neighbourhoods within $C$. These codes have been widely studied for over two decades.
We give an improvement over all the best known upper bounds, some of which have stood for over 20 years, for identifying codes in trees, proving the upper bound of $(n+\ell)/2$, where $n$ is the order and $\ell$ is the number of leaves (pendant vertices) of the graph. In addition to being an improvement in size, the new upper bound is also an improvement in generality, as it actually holds for bipartite graphs having no twins (pairs of vertices with the same closed or open neighbourhood) of degree~2 or greater. We also show that the bound is tight for an infinite class of graphs and that there are several structurally different families of trees attaining the bound. We then use our bound to derive a tight upper bound of $2n/3$ for twin-free bipartite graphs of order $n$, and characterize the extremal examples, as 2-corona graphs of bipartite graphs. This is best possible, as there exist twin-free graphs, and trees with twins, that need $n-1$ vertices in any of their identifying codes. We also generalize the existing upper bound of $5n/7$ for graphs of order $n$ and girth at least~5 when there are no leaves, to the upper bound $\frac{5n+2\ell}{7}$ when leaves are allowed. This is tight for the $7$-cycle $C_7$ and for all stars.
\end{abstract}

\noindent\textbf{Keywords:} Identifying codes; trees; bipartite graphs; upper bound
\section{Introduction}

An identifying code of a graph is a subset of its vertices that allows to distinguish all pairs of vertices by means of their neighbourhoods within the identifying code. This concept is related to other similar notions that deal with domination-based identification of the vertices/edges of a graph or hypergraph, such as locating-dominating sets~\cite{S87}, separating systems~\cite{BS07,B72}, or test covers~\cite{MS85}, to name a few. These types of problems have natural applications in fault-detection in networks~\cite{KCL98,UTS04}, biological diagnosis~\cite{MS85} or machine learning~\cite{BGL19}.

A lot of the research in the area has been dedicated to understanding the behaviour of these types of problems for graphs of specific graph classes, by proving lower and upper bounds on the smallest cardinality of a solution. One of the simplest classes of graphs to consider is the one of trees, and indeed a large number of papers in the area consider identifying codes of trees, see for example~\cite{A10,BCHL05,BCMMS07,CS11,GGNUV01,HHH06,JR18,NLG16,RRM19,S87}. We improve and generalize some of these results. As some of the best known bounds (see Theorem \ref{2lTreeBound}), which are tight for some trees, have been around for more than twenty years, it is quite surprising that we manage to improve the upper bound for the smallest size of identifying codes of trees by a notable amount, with a quite simple proof. Moreover, our bounds do not only hold for trees, but also for larger classes of graphs, in particular, bipartite graphs without twins of degree at least~2. We also apply the new bound to get a new tight bound for graphs of girth at least~5.

\paragraph{Notations and definitions.} We consider connected finite undirected graphs on at least three vertices. Let us first define some basic notations. A vertex $u\in V(G)$ is said to be a \textit{leaf}, that is, a pendant vertex, if it has degree~1. A vertex $v\in V(G)$ is said to be a \textit{support vertex} if it is adjacent to a leaf.
We denote by $L(G)$ the set of leaves and by $S(G)$ the set of support vertices in graph $G$. Moreover, we denote the number of leaves and support vertices by $\ell(G)=|L(G)|$ and $s(G)=|S(G)|$, respectively. A graph is \textit{bipartite} if it does not contain any odd cycles and it has \textit{girth} $g$ if the length of its smallest cycle is~$g$. The \emph{$2$-corona} $H\circ_2$ of a graph $H$ (defined in~\cite[Section 1.3]{bookTD}) is the graph of order~$3|V(H)|$ obtained from $H$ by adding a vertex-disjoint copy of a path $P_2$ for each vertex $v$ of $H$ and adding an edge joining $v$ to one end of the added path. We define the \emph{$k$-corona} $H\circ_k$ of $H$ in an analogous way with $P_k$, for any $k\geq1$.

We denote by $N(v)\subseteq V(G)$ the \textit{open neighbourhood} of vertex $v$ and by $N[v]=N(v)\cup\{v\}$, its \textit{closed neighbourhood}. If $C$ is a set of vertices, or a \textit{code}, and $v$, a vertex, we denote the intersection between $N[v]$ and code $C$ by the $I$-set of $v$, $I(C;v)=N[v]\cap C$. When code $C$ is clear from the context, we use $I(v)$. Identifying codes were defined over twenty years ago in~\cite{KCL98} by Karpovsky et al. and since then they have been studied in a large number of articles, see~\cite{biblio} for an online bibliography. A set $C\subseteq V(G)$ is called an \textit{identifying code} of $G$ if for each pair of distinct vertices $u,v\in V(G)$, we have that (i) they are \textit{covered}/\textit{dominated}, that is, $I(u)\neq \emptyset$ and $I(v)\neq \emptyset$ and (ii) they are \textit{distinguished}/\textit{separated}, that is, their $I$-sets are distinct, that is, $$I(u)\neq I(v).$$ 
The vertices of the code are called \emph{codewords}. Two distinct vertices are \emph{open twins} if their open neighbourhoods are the same, and \emph{closed twins} if their closed neighbourhoods are the same. A graph admits an identifying code if and only if it has no pair of closed twins~\cite{KCL98}; in that case we say the graph is \emph{identifiable}. Note that any connected bipartite (in fact, triangle-free) graph is identifiable, with the exception of the complete graph of order~2. We say that a graph is \textit{twin-free} if it contains neither open nor closed twins. Twins are important for identifying codes, indeed closed twins cannot be identified, and for any set of mutually open twins, at most one can be absent from the identifying code.
For an identifiable graph $G$, we denote by $\ID(G)$ the smallest size of an identifying code of $G$.

Identifying codes and related concepts have been extensively studied for trees;
in particular, lower and upper bounds involving the number of leaves and support vertices have been proposed. Among these, the following two (due to Gimbel et al. and Rahbani et al.) are the currently best known upper bounds.

\begin{theorem}[\protect{\cite[Theorem 15]{GGNUV01}}]\label{2lTreeBound}
Let $T$ be a tree on $n\geq3$ vertices. Then $\ID(T)\leq \frac{n+2\ell(T)-2}{2}$. 
\end{theorem}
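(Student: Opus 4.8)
The plan is to prove $\ID(T)\le \frac{n+2\ell(T)-2}{2}$ by induction on $n$, repeatedly peeling off a pendant path starting from a deepest leaf. The base cases are the small trees ($n=3$) and, more importantly, the case where $T$ is a path: then $\ell(T)=2$, the bound reads $\frac{n+2}{2}$, and this is met by an explicit near-alternating code on the path, so the statement holds. For the inductive step I assume $T$ is not a path, so it has a \emph{branch vertex} of degree at least~$3$. I root $T$ and pick a leaf $v_1$ of maximum depth; then every child of its support vertex $v_2$ is a leaf.

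First I dispose of the \emph{multi-leaf case}: if $v_2$ has $k\ge 2$ leaf children, I delete all of them to obtain $T'$. Then $n'=n-k$ and, since $v_2$ becomes a leaf while no other leaf status changes, $\ell'=\ell-k+1$, so the inductive bound gives $|C'|\le \frac{n+2\ell-3k}{2}$, leaving a slack of $\frac{3k-2}{2}\ge 2$ to the target. I reinstate the deleted leaves by placing $v_2$ and all but one of its leaf children into the code (at most $k$ new codewords, and $k\le \frac{3k-2}{2}$ since $k\ge 2$): the included leaves receive private codewords, the omitted one is dominated and separated through $v_2$, and the $I$-set of $v_2$, now containing several leaves, is unique.

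Otherwise $v_2$ has $v_1$ as its only child and hence degree~$2$. I follow the path upward from $v_1$ until the first branch vertex $b$ of degree at least~$3$, producing a pendant path $v_1v_2\cdots v_m$ with $v_m$ adjacent to $b$ and each internal $v_i$ of degree~$2$; such a $b$ exists because $T$ is not a path. I delete $\{v_1,\dots,v_m\}$ to form $T'$. The decisive bookkeeping point is that peeling all the way up to a genuine branch vertex loses \emph{exactly one} leaf and keeps $b$ a non-leaf (its degree drops only to at least~$2$), so $n'=n-m$ and $\ell'=\ell-1$, giving a slack of $\frac{m+2}{2}$ to the target. It then remains to extend $C'$ to an identifying code of $T$ using at most $\frac{m+2}{2}$ codewords supported on $\{v_1,\dots,v_m,b\}$, which I do with an explicit alternating pattern $v_1,v_3,v_5,\dots$ along the path. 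A monotonicity observation handles the interaction with the rest of the tree: adding codewords only enlarges $I$-sets, and every newly added codeword lies outside $V(T')$ except possibly $b$, so it cannot merge two $I$-sets that $C'$ already separated.

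The step I expect to be the main obstacle is precisely this local extension at the \emph{attachment} vertex $b$, especially for short or even-length pendant paths. For $m=2$ the two path vertices have closed neighbourhoods differing only in $b$, so they cannot be separated unless $b\in C$; here one must spend an extra codeword on $b$ itself, and the crux is that the slack $\frac{m+2}{2}$ is always large enough to absorb including $b$ when it is forced, so the induction never needs a strengthened hypothesis about which vertices of $T'$ already belong to $C'$. Verifying the periodic pattern case-by-case according to the parity of $m$, and checking domination and pairwise separation of $v_1,\dots,v_m,b$ at both ends of the path, is the routine but delicate core of the argument.
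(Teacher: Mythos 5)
Your overall plan (induction, peeling either all sibling leaves of a deepest support vertex or a maximal pendant path down to the first branch vertex $b$) is sound, and your slack bookkeeping is correct; note that this is also a genuinely different route from the paper, which never proves this statement directly but cites it and instead establishes the stronger bound $\ID(G)\le(n+\ell(G))/2$ of Theorem~\ref{TheShift} by a global, non-inductive construction (two parity-layered codes plus local shifts). The genuine gap is in the step you dismiss as routine: the local extension at $b$. Your alternating pattern $v_1,v_3,v_5,\dots$ with the fallback ``include $b$ when it is forced'' fails exactly when $m$ is odd, $b$ already belongs to $C'$, and $I(C';b)=\{b\}$. Then your pattern makes $v_m$ a codeword, so $I(v_m)=\{v_m,b\}=I(b)$ and the pair $v_m,b$ is not separated; including $b$ is vacuous since $b$ is already a codeword; and when $n$ is odd the integer budget is exactly $(m+1)/2$, which the pattern has already spent, so nothing can be added. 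This situation really arises inside your induction. Take the spider $T$ with centre $b$ and legs of lengths $2$, $5$ and $3$, rooted at the end of the $5$-leg, so that the deepest leaf lies on the $3$-leg $v_3v_2v_1$ (with $v_3\sim b$) and $T'$ is a path on $8$ vertices. Writing the $2$-leg as $b,p_1,p_2$ and the $5$-leg as $b,q_1,\dots,q_5$, the set $C'=\{b,p_2,q_2,q_3,q_4\}$ is an identifying code of $T'$ of size $5=(8+2\cdot 2-2)/2$, so the induction may hand you exactly this $C'$, and it satisfies $I(C';b)=\{b\}$. For $T$ we have $n=11$, $\ell=3$, so the target is $\lfloor 15/2\rfloor=7$ and you may add only two codewords; one checks that the unique valid extension within this budget is $\{v_2,v_3\}$ --- not your $\{v_1,v_3\}$, and not anything involving $b$.

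The gap is repairable, but by deforming the pattern rather than augmenting it: in this bad case take three consecutive codewords at the leaf end and then alternate, i.e.\ $\{v_2,v_3,v_4\}\cup\{v_6,v_8,\dots,v_{m-1}\}$, which also has size $(m+1)/2$, leaves $I(b)=\{b\}$ intact and distinct from every other $I$-set, and identifies the path. So the case analysis must branch not only on the parity of $m$ and on whether $b\in C'$, but also on whether $I(C';b)=\{b\}$; your claim that the interaction with $C'$ near the attachment never matters is precisely what breaks. A smaller instance of the same oversight sits in your multi-leaf case at $k=2$: there ``all but one of its leaf children'' is a single leaf, so $I(v_2)$ contains one leaf codeword rather than ``several'', and if the parent $v_3$ of $v_2$ is not in the code then $I(v_2)=\{v_2,\ell_1\}=I(\ell_1)$. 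This too is fixable (if $v_3\notin C'$ then $v_2\in C'$ by domination of the leaf $v_2$ in $T'$, which frees one unit of slack to add $v_3$ or the second leaf), but again only by inspecting $C'$ around the attachment point.
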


\begin{theorem}[\protect{\cite[Theorem 11]{RRM19}}]\label{3/5TreeBound}
Let $T$ be a tree on $n\geq3$ vertices. Then $\ID(T)\leq \frac{3n+2\ell(T)-1}{5}$. Equality holds if and only if $T=P_4$.
\end{theorem}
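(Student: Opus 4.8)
The plan is to prove the inequality $\ID(T)\le\frac{3n+2\ell-1}{5}$ (writing $\ell=\ell(T)$) by induction on $n$, and then to extract the equality characterization from a refined look at the inductive step. The base cases are the trees with $n\in\{3,4\}$, namely $P_3$, $P_4$ and the star $K_{1,3}$, which I would check by hand; in particular $\ID(P_4)=3$ realizes the bound exactly. The engine of the induction is the following accounting inequality: if $T'$ is obtained from $T$ by deleting a set $R$ of $r$ vertices, leaving a tree with $\ell'$ leaves, and if every identifying code of $T'$ can be turned into an identifying code of $T$ by adding at most $k$ vertices, then $\ID(T)\le\ID(T')+k$, and the inductive hypothesis closes the step as soon as $5k\le 3r+2(\ell-\ell')$. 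Thus every reduction must delete enough vertices, without destroying too many leaves, to pay for the codewords needed to identify the deleted part.

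To locate reducible configurations, I would root $T$ arbitrarily and let $v$ be a support vertex of maximum depth, so that all children of $v$ are leaves; write $p$ for its parent and $a$ for the number of leaf-children of $v$. The case analysis is organised by $a$ and by the local structure at $p$. When $a\ge2$, the leaf-children of $v$ are mutual open twins, so any code must contain at least $a-1$ of them; deleting $a-1$ of them gives $r=\ell-\ell'=k=a-1$, meeting $5k\le 3r+2(\ell-\ell')$ with equality. Otherwise $a=1$ and $\deg(v)=2$, and I would split further on how many support-children $p$ has and on $\deg(p)$, deleting the pendant path at $v$ (two vertices, and sometimes also $p$ or a whole sibling branch of $p$), always choosing $R$ so that $3r+2(\ell-\ell')$ is at least five times the number of codewords genuinely needed to re-identify the deleted vertices. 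Given an identifying code $C'$ of $T'$, one then adds a small set $A\subseteq R$, typically the support vertices of the deleted pendant paths, and verifies that $C'\cup A$ identifies $T$.

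The first main obstacle is boundary separation in this extension step. The newly added codewords lie in $N[p]$ (or $N[g]$), so they also enlarge the $I$-sets of the cut vertices, and one must check that $p$ and its surviving neighbours remain pairwise separated and separated from the reattached vertices. This is genuinely constraining: for example, deleting $\{u,v\}$ at a degree-two $p$ and putting $v$ back as a codeword can collide $I(v)$ with $I(p)$ precisely when the grandparent lies outside $C'$, which forces one either to delete a longer pendant path or to swap a codeword near the boundary. Carrying out this verification in every case, and letting it dictate which vertices of $R$ may be placed in the code, is the routine but unavoidable bulk of the argument.

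The second and subtler obstacle is the equality characterization. The accounting inequality is tight only along reductions with $5k=3r+2(\ell-\ell')$ at every step, and the open-twin reduction above is exactly such a tight step, which at first glance suggests extremal trees beyond $P_4$. The resolution is that a tight \emph{construction} need not make $\ID(T)$ equal to the bound: attaching extra leaves to a support vertex of $P_4$ keeps the construction tight, yet one readily exhibits a strictly smaller identifying code. I would therefore prove the sharper statement that $T\neq P_4$ implies $\ID(T)<\frac{3n+2\ell-1}{5}$ by strengthening the induction to carry a strict gap away from $P_4$: show that each reduction either strictly improves the estimate or reduces to a smaller non-$P_4$ tree whose gap is already strict, with $P_4$ as the unique tight base case. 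The crux is to confirm that the one genuinely tight reduction, the open-twin deletion, always admits a better code than the generic construction, thereby restoring strictness; pinning this down carefully is where I expect the real work to lie.
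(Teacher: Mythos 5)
First, a point of comparison you should be aware of: the paper contains no proof of this statement at all. Theorem~\ref{3/5TreeBound} is quoted as background from~\cite{RRM19}, and the authors' own contribution (Theorem~\ref{TheShift}) is designed to supersede it. So your proposal must stand on its own, and on its own it is a plan rather than a proof, with concrete gaps in the parts you do specify. Your accounting inequality $5k\le 3r+2(\ell-\ell')$ is derived correctly, and the open-twin reduction does meet it with equality; but note that this reduction is not always available: when $T=K_{1,a}$, deleting $a-1$ leaf-children leaves $P_2$, which is not identifiable, so all stars must be absorbed into the base cases (they satisfy the bound with slack only $2/5$, so nothing weaker than the exact bound survives them). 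More seriously, the case $a=1$ with $\deg(v)=2$ --- where essentially all of the theorem's content lies --- is described but not resolved, and the difficulty you acknowledge cascades. If $p$ and its parent $g$ both have degree~$2$, then deleting $\{u,v\}$ gives budget $k\le 1$ (here $r=2$ and $\ell'=\ell$ since $p$ becomes a leaf), and a single added codeword must lie in $\{u,v\}$ to dominate $u$, while separating $u$ from $v$ requires a codeword in $N[u]\,\triangle\, N[v]=\{p\}$; so the step works only if $p\in C'$ already. Deleting $\{u,v,p\}$ or $\{u,v,p,g\}$ reproduces the same phenomenon one level up (one needs $g\in C'$, respectively the next vertex $h\in C'$). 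Every reduction you propose therefore succeeds only conditionally on the boundary intersection $C'\cap N[w]$ at the cut vertex $w$, and the ``swap a codeword near the boundary'' escape is precisely the step that can break separations deeper inside $T'$ and is never verified. Proofs of bounds of this type standardly resolve this by strengthening the induction hypothesis --- for instance, proving that some optimal or near-optimal code can be forced to contain a prescribed boundary vertex, or inducting on rooted trees with a marked vertex. Your plan contains no such strengthening, and without it there is no reason to believe your finite case analysis closes; that, not routine verification, is the missing idea.

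The second gap is in the equality characterization. Strictness propagates automatically through any reduction whose target $T'$ is not $P_4$: by induction $\ID(T')$ is strictly below its own bound, so $\ID(T)\le\ID(T')+k$ stays strictly below the bound for $T$ whenever the accounting inequality holds (tight or not). So what genuinely needs checking is the set of trees that reduce to $P_4$ (or to a star) in a single step. For those, your argument rests on the claim that the open-twin deletion is ``the one genuinely tight reduction''. That claim is unsubstantiated and, as stated, false: for example, deleting a pendant path on five vertices whose attachment vertex becomes a leaf gives $r=5$, $\ell'=\ell$, $k=3$, which also satisfies $5k=3r+2(\ell-\ell')$ with equality. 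Hence strictness must be re-derived separately for every tight configuration in your (unwritten) case list, not just for the twin case; this is a second round of the same case analysis, not a final detail to be pinned down.
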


The bound in Theorem~\ref{2lTreeBound} is better when the tree has few leaves, while the bound in Theorem~\ref{3/5TreeBound} is better when there are many leaves. Both bounds are tight for the 4-vertex path $P_4$, for which $\ID(P_4)=3$. Moreover, Theorem~\ref{2lTreeBound} is tight for any path on at least four vertices since $\ID(P_n)=\left\lceil\frac{n+1}{2}\right\rceil$ as proved in~\cite{BCHL04,GGNUV01}. However, we will see that tightness only holds for this case and on some trees of odd order with three leaves.

The following bound for graphs of girth at least~5 with no leaves was also proved by Balbuena et al.

\begin{theorem}[\protect{\cite[Theorem 13]{BFH15}}]\label{The:ID5/7delta}
Let $G$ be a graph of order $n$ and girth at least $5$ with minimum degree $\delta(G)\geq2$. Then $\ID(G)\leq 5n/7$.
\end{theorem}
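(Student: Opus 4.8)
The plan is to view the identifying code as the complement of a set of ``savings''. First I would observe that since $G$ has girth at least $5$ it is triangle-free and $C_4$-free, so together with $\delta(G)\geq 2$ it has no two closed twins; hence $V(G)$ is itself an identifying code. The goal then becomes to exhibit a set $R\subseteq V(G)$ with $|R|\geq 2n/7$ such that $C=V(G)\setminus R$ is still identifying, which gives $\ID(G)\leq n-2n/7=5n/7$. I would record the exact feasibility conditions on such an $R$: writing $C=V(G)\setminus R$, the set $C$ dominates $G$ if and only if no closed neighbourhood $N[v]$ is contained in $R$, and $C$ separates every pair $u\neq v$ if and only if no symmetric difference $N[u]\triangle N[v]$ is contained in $R$. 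Equivalently, $\ID(G)$ is the minimum size of a transversal of the hypergraph whose edges are all sets $N[v]$ and all sets $N[u]\triangle N[v]$, and the task is to bound this transversal number by $5n/7$.

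Next I would use the girth to make these constraints local and to identify which of them actually bite. Because the girth is at least $5$, two distinct vertices have at most one common neighbour and two adjacent vertices have none. Splitting the separation constraints by distance: if $d(u,v)\geq 3$ then $N[u]\cap N[v]=\emptyset$, so $N[u]\triangle N[v]\supseteq N[u]$ and the constraint is already implied by domination; if $d(u,v)=2$ then $|N[u]\triangle N[v]|=\deg(u)+\deg(v)\geq 4$; and if $u,v$ are adjacent then $|N[u]\triangle N[v]|=\deg(u)+\deg(v)-2\geq 2$. Thus the only constraints of size $2$, the genuinely restrictive ones, come from edges joining two vertices of degree exactly $2$, while domination contributes the size-$\geq 3$ constraints $N[v]$. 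This already explains the extremal role of $C_7$: there $R$ must avoid every pair $N[i]\triangle N[i+1]=\{i-1,i+2\}$, which forces $|R|\leq 2$ and matches $5\cdot 7/7=5$.

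Finally, and this is the heart of the argument, I would construct $R$ greedily, taking it to be maximal subject to the feasibility conditions above, and then bound $|V(G)\setminus R|$ by a discharging argument. When $R$ is maximal, every vertex $v\notin R$ is \emph{blocked}: there is a hyperedge $e$ (a closed neighbourhood or a symmetric difference) with $v\in e$ and $e\setminus\{v\}\subseteq R$, and I would charge $v$ (splitting its unit fractionally if needed) to a vertex of $R$ lying in such a witnessing edge. The main obstacle is to show that each vertex of $R$ receives total charge at most $5/2$, which yields $n=|R|+|V(G)\setminus R|\leq \tfrac72|R|$ and hence $|R|\geq 2n/7$. Girth at least $5$ is exactly what controls this: it bounds the number of short hyperedges through a fixed vertex and prevents the local overlaps that would let a single vertex of $R$ block too many others. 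I expect the delicate point to be the degree-$2$ regions, where the size-$2$ separation constraints chain together along maximal paths of degree-$2$ vertices and behave like arcs of a cycle; there the charging is tight and reproduces the $C_7$ ratio of $5/2$, whereas every vertex of degree at least $3$ loosens a constraint and leaves slack, so these degree-$2$ threads are the worst case and the bound $\ID(G)\leq 5n/7$ follows.
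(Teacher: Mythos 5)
First, a point of context: this paper does not prove the statement at all --- Theorem~\ref{The:ID5/7delta} is quoted verbatim from \cite{BFH15} and used as a black box (e.g.\ as a base case inside the proof of Theorem~\ref{The:ID5/7}). So there is no proof here to compare yours against, only the reference; and your plan is not a reconstruction of that argument but an independent one, whose central step unfortunately fails. Your first two paragraphs are correct and are a clean way to set things up: $\ID(G)$ is the minimum size of a transversal of the hypergraph whose edges are the sets $N[v]$ and $N[u]\triangle N[v]$; girth at least~$5$ implies that separation of pairs at distance at least~$3$ is subsumed by domination, that pairs at distance~$2$ give edges of size $\deg(u)+\deg(v)\geq 4$, and that the only size-$2$ edges come from adjacent pairs of degree-$2$ vertices. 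But this is the routine part. The entire content of the theorem is your third paragraph, and there the key claim --- that whenever $R$ is \emph{maximal} among feasible sets, the blocked vertices can be charged to $R$ with at most $5/2$ charge per vertex of $R$ --- is not just unproven but false.

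Concretely, take $G=C_n$ with $n\equiv 0 \pmod 8$ and $R=\{i : i\equiv 0 \text{ or } 1 \pmod 8\}$. The minimal hyperedges of $C_n$ are the triples of three consecutive vertices (domination) and the pairs of vertices at distance exactly~$3$ (separation of adjacent vertices; all other separation edges contain one of these). The set $R$ contains no triple of consecutive vertices and no two vertices at distance~$3$, so it is feasible; and it is maximal, since every $v\notin R$ is blocked: positions $\equiv 2$ and $\equiv 7 \pmod 8$ via a domination triple (e.g.\ $N[1]=\{0,1,2\}$ and $N[8]=\{7,8,9\}$), and positions $\equiv 3,4,5,6 \pmod 8$ via a distance-$3$ pair (e.g.\ $\{0,3\}$, $\{1,4\}$, $\{5,8\}$, $\{6,9\}$). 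One checks these are the \emph{only} witnesses, so each pair $\{8k,8k+1\}\subseteq R$ must absorb the full charge of six blocked vertices, i.e.\ charge~$3$ per vertex of $R$ no matter how you split fractionally. Accordingly $|R|=n/4<2n/7$ and $|V(G)\setminus R|=3n/4>5n/7$: an arbitrary maximal $R$ simply is not large enough, so maximality alone can never yield your $5/2$ bound. (For $C_n$ the theorem itself is of course safe, since $\ID(C_n)\approx n/2$; it is the proof method that breaks.) To rescue the plan you would need a \emph{maximum} $R$, or a carefully constructed one --- for instance handling the chains of degree-$2$ vertices globally rather than greedily --- and that is precisely the nontrivial work carried out in \cite{BFH15}, which proceeds by a considerably longer structural analysis rather than by greedy maximality plus discharging.
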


\paragraph{Our results.}
Inspired by the aforementioned results from the literature, we present improved (and tight) upper bounds. Our bounds not only improve on the known results for trees, but also hold for a larger class of graphs: bipartite graphs which do not have any twins of degree~2 or greater. Observe that this class of graphs contains, for example, the class of $C_4$-free bipartite graphs.

In particular, we show in Theorem~\ref{TheShift} that the bound $\ID(G)\leq\frac{n+\ell(G)}{2}$ holds for every bipartite graph $G$ of order $n$ which does not have any twins of degree~2 or greater. This bound is never larger than either of the bounds of Theorems~\ref{2lTreeBound} or~\ref{3/5TreeBound}. In fact, Theorem~\ref{2lTreeBound} is only tight when $\ell(T)\leq 3$. (Otherwise, one can check that our bound is smaller, indeed then we have $\lfloor(n+\ell(T))/2\rfloor<\lfloor(n+2\ell(T)-2)/2\rfloor$.) The bound of Theorem~\ref{3/5TreeBound} can be modified into the equivalent form $\ID(T)\leq (3n-3\ell(T)-1)/5+\ell(T)$. If we similarly modify the bound of Theorem~\ref{TheShift} to $\ID(T)\leq (n-\ell(T))/2+\ell(T)$, then we can clearly observe the improvement provided by our bound. 
Moreover, as opposed to the existing bounds, our bound is tight for a very rich class of graphs, in particular, for many trees: paths, stars, and more complicated examples that will be described in Section~\ref{Sec:Remarks}. Our proof is also rather simple.

We also extend the bound $\ID(G)\leq n - s(G)$ (even holding for identifying codes that are also total dominating sets), which was known to hold for trees~\cite{HHH06}, to a class that includes all triangle-free graphs. We also show that the slightly larger bound $\ID(G)\leq n-s(G)+1$ holds for all identifiable graphs. Both bounds are tight.

We then apply two of the above upper bounds to \emph{twin-free} bipartite graphs, showing that for such a graph $G$ of order $n$, we always have $\ID(G)\leq 2n/3$ (unless $G$ is the path $P_4$). Moreover, we characterize those graphs reaching this bound, as the 2-coronas of bipartite graphs.

Finally, we extend the bound of Theorem~\ref{The:ID5/7delta} to all graphs of girth at least~5, showing that for such a graph $G$ of order $n$, we have $\ID(G)\leq\frac{5n+2\ell(G)}{7}$. This bound is tight for all stars and for the cycle $C_7$. We also present a new infinite family of graphs of girth at least $5$ which has largest known ratio $\ID(G)/n$ for graphs with $\delta(G)\geq2$.

We present our upper bounds for identifying codes in bipartite graphs in Section~\ref{SecBounds}. The application to twin-free graphs is presented in Section~\ref{sec:twinfree}. The bound for graphs of girth at least~5 is proved in Section~\ref{sec:girth5}.

We conclude in Section~\ref{sec:conclu}.

\paragraph{Further related work.}
For a tree $T$ of order $n\geq 4$, the lower bounds $\ID(T)\geq\frac{3(n-1)}{7}$~\cite{BCHL05,GGNUV01}, $\ID(T)\geq\frac{2n-s(T)+3}{4}$~\cite{RRM19} and $\ID(T)\geq\frac{3n+\ell(T)-s(T)+1}{7}$~\cite{BCMMS07} have been proved.

We note that a polynomial-time algorithm to compute $\ID(T)$ for a tree $T$ has been provided in~\cite{A10}, however the problem is hard to approximate within any sub-logarithmic factor even for bipartite graphs with no 4-cycles~\cite{BLLPT15} (bipartite graphs wihout $4$-cycles do not have any twins of degree~2 or greater). Identifying codes in graphs of girth at least~5 were also considered in~\cite{BFH15,FP11}.

Many upper bounds for trees similar to those of Theorems~\ref{2lTreeBound} and \ref{3/5TreeBound} have been obtained for related graph parameters. 
In particular, tight upper bounds on trees for identifying codes that are also total dominating sets have been considered in~\cite{JR18,NLG16}. Similar results have also have been proved for the related locating-dominating sets (where one only needs to distinguish pairs of vertices that are not part of the solution set)~\cite{BCMMS07,BDLP21,JR18} and their total dominating variant locating-total dominating sets~\cite{CS11}.

Bounds for twin-free graphs have been studied for locating-dominating sets. It was proved in~\cite{GGM14} that every twin-free bipartite graph and every twin-free graph with no $4$-cycles has a locating-dominating set of size at most $n/2$; the bound is tight for infinitely many trees, which are characterized in~\cite{Heia}.

\section{Two upper bounds}\label{SecBounds}

In this section, we present our main result in Theorem~\ref{TheShift}. However, first we give some upper bounds which are useful when the number of support vertices in $G$ is large. 

\subsection{A first pair of upper bounds using the number of support vertices}

Our first lemma holds for identifying codes that are also \emph{total dominating sets}, that is, every vertex of the graph has a neighbour in the dominating set~\cite{bookTD}. A graph admits no total dominating set only if it has isolated vertices. For an identifiable graph $G$ with no isolated vertices, we denote by $\IDt(G)$ the smallest size of a total dominating identifying code of $G$. Total dominating identifying codes have usually been called \emph{differentiating-total dominating sets} in the literature, see for example~\cite{GGNUV01,HHH06,JR18,NLG16}.

The following lemma has previously been proven for trees in~\cite{HHH06}. We extend it to a larger class which contains, for example, all triangle-free graphs (note that for every triangle-free graph $G$, $G-L(G)$ is identifiable, unless $G-L(G)$ contains a component isomorphic to $P_2$).

\begin{lemma}\label{LemSupportBip}
Let $G$ be a connected graph on $n\geq4$ vertices that is not the path $P_4$, such that $G-L(G)$ is identifiable or $G$ is triangle-free.
Then $$\IDt(G)\leq n-s(G).$$
\end{lemma}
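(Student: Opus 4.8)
The plan is to exhibit an explicit total dominating identifying code of size $n-s(G)$. For each support vertex $v\in S(G)$ fix one leaf neighbour $u_v$ (it exists by definition, and is unique to $v$ since a leaf has a single neighbour), and set $C=V(G)\setminus\{u_v : v\in S(G)\}$. As distinct support vertices give distinct excluded leaves, $|C|=n-s(G)$; note that $C$ contains every non-leaf vertex, i.e. $V(G)\setminus L(G)\subseteq C$, and for each support vertex all of its leaves but one. It then remains to check that $C$ is a total dominating identifying code.

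First I would verify total domination. An excluded leaf $u_v$ is dominated by its support $v\in C$, and any included leaf by its support. A support vertex has degree at least $2$ (otherwise $G=P_2$) and at most one excluded neighbour, namely $u_v$, so it keeps a neighbour in $C$; and any remaining internal vertex has all its neighbours in $C$. Hence every vertex has a neighbour in $C$, so all $I$-sets are nonempty. Next I would dispose of the easy separation cases: for an excluded leaf $I(u_v)=\{v\}$, and these singletons are pairwise distinct over $v\in S(G)$ and differ from $I(w)$ for every codeword $w\in C$ (since $w\in I(w)$, so either $w\neq v$, or $w=v$ and $I(v)\supsetneq\{v\}$ by total domination). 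The only pairs left to separate are thus two distinct codewords $u,w\in C$. If $I(u)=I(w)$, then $u\in I(u)=I(w)\subseteq N[w]$ forces $u$ and $w$ to be adjacent; and if one of them, say $u$, is a leaf, its unique neighbour is its support $v$, forcing $w=v$, but using $n\geq4$ and connectivity one checks $I(v)\supsetneq\{u,v\}=I(u)$. So the crux is two adjacent non-leaf codewords $u,w$ with $N[u]\cap C=N[w]\cap C$.

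For this crux I would split on the hypothesis. If $G-L(G)$ is identifiable, then since $V(G)\setminus L(G)\subseteq C$, intersecting the equality $N[u]\cap C=N[w]\cap C$ with $V(G)\setminus L(G)$ yields $N[u]\setminus L(G)=N[w]\setminus L(G)$; that is, $u$ and $w$ are closed twins in $G-L(G)$, contradicting identifiability. If instead $G$ is triangle-free, then any neighbour $z\neq w$ of $u$ satisfies $z\notin N[w]$ (a common neighbour of the adjacent pair $u,w$ would form a triangle), so $N[u]\cap C=N[w]\cap C$ forces $z\notin C$, i.e. $z$ is an excluded leaf. As $u$ has at most one excluded neighbour, $\deg(u)\leq2$, and symmetrically $\deg(w)\leq2$; being non-leaves, both have degree exactly $2$ with $N(u)=\{w,p\}$ and $N(w)=\{u,q\}$ for excluded leaves $p,q$, so the component of $u$ is the path $p-u-w-q$ and $G=P_4$, which is excluded.

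The main obstacle is precisely this final step of separating two adjacent non-leaf codewords, and the two alternative hypotheses are exactly what handle it: one via the absence of closed twins in $G-L(G)$, the other via a triangle-free degree argument that collapses to the excluded graph $P_4$. The remaining verifications (total domination and the leaf cases) are routine, and the exclusion of $P_4$ is genuinely needed, since $\IDt(P_4)=3>2=n-s(P_4)$.
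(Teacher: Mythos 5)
Your proposal is correct and takes essentially the same approach as the paper: both construct the code by excluding exactly one leaf per support vertex and then verify that the complement is a total dominating identifying code, with the two hypotheses used in exactly the same role (separating adjacent non-leaf codewords). The only difference is cosmetic: the paper argues codeword separation by noting that the induced subgraph $G[C]$ is connected and triangle-free or identifiable (so closed neighbourhoods in $G[C]$ are unique), whereas you run the equivalent case analysis directly in $G$, which if anything handles slightly more explicitly the step the paper glosses over when passing from identifiability of $G-L(G)$ to that of $G[C]$.
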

\begin{proof}
Let us choose for each support vertex $u\in S(G)$ exactly one adjacent leaf $v\in L(G)$ and say that these vertices form the set $C'$. Hence, $|C'|=s(G)$. Next, we form the code $C=V(G)\setminus C'$. We show that $C$ is a total dominating identifying code in $G$. We have $|C|=n-s(G)$. 

Observe that each non-codeword $v$ is a leaf. Moreover, if $u,v\in C'$ and $I(C;v)=I(C;u)=\{w\}$, then $w\in S(G)$ but we have chosen two vertices adjacent to $w$ into $C'$, a contradiction. Since $C'\subseteq L(G)$, $G[C]$ is a connected induced subgraph of $G$. Moreover, as $n\geq4$, we have $|I(C;c)|\geq2$ for each codeword $c\in C$. Thus, codewords and non-codewords have different $I$-sets. Furthermore, we have $I(c)\neq I(c')$ for two codewords $c\neq c'$ since $|V(G[C])|\geq 3$ (as $G$ is not $P_4$) and there are no triangles in $G[C]$ or $G[C]$ is identifiable and hence, each closed neighbourhood is unique in $G[C]$. Finally, $C$ is total dominating since $G[C]$ is connected.
\end{proof}

Lemma~\ref{LemSupportBip} is tight for example for 3-coronas of graphs (but for these graphs the regular identifying code number is smaller). It is also tight for both identifying codes and total dominating identifying codes, by considering for example the 1-corona of any triangle-free graph of order at least~3, or any star of order at least~4.

Moreover, we require the stated restrictions in the claim. For example, for the 1-corona $K_m\circ_1$ of a complete graph of order $m\geq3$ (for which $K_m\circ_1 - L(K_m\circ_1)$ is far from identifiable), we have $\IDt(K_m\circ_1)=2m-1$ and not $2m-s(K_m\circ_1)=m$. Indeed, all vertices of the $m$-clique need to be in the code to totally dominate the leaves. Moreover, for any two vertices of the clique, one of their two leaf neighbours needs to be in the code to identify them, and hence $\IDt(K_m\circ_1)\geq 2m-1$. Moreover, $\IDt(K_m\circ_1)\leq 2m-1$ by considering the whole vertex set except one leaf as a code.

The same example shows that the statement of Lemma~\ref{LemSupportBip} is also not true for identifying codes on general graphs, as $\ID(K_m\circ_1)=m+1$ and not $2m-s(K_m\circ_1)=m$. Indeed, as above, we need at least $m-1$ leaf codewords, to distinguish the vertices of the $m$-clique. Moreover, to distinguish leaves from their neighbour support vertex, we need at least two codewords inside the $m$-clique. This implies $\ID(K_m\circ_1)\geq m+1$. Conversely, consider $L(G)$ together with any two vertices in the $m$-clique, and remove a leaf neighbour of one of the clique code vertices: this set forms an identifying code of size $m+1$.

In the following theorem, we show that the previous construction is actually the worst case for identifying codes: in this case, a very similar upper bound as the one of Lemma~\ref{LemSupportBip} is true.

\begin{theorem}
Let $G$ be a connected identifiable graph on $n\geq3$ vertices. Then $$\ID(G)\leq n-s(G)+1.$$
\end{theorem}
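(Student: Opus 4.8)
The plan is to adapt the support-vertex construction from Lemma~\ref{LemSupportBip} so that it yields an ordinary identifying code rather than a total dominating one, paying at most one extra codeword for the loss of the triangle-free/identifiable hypothesis. First I would again choose, for each support vertex $u\in S(G)$, exactly one adjacent leaf, collect these in a set $C'$ of size $s(G)$, and set $C=V(G)\setminus C'$. As in the lemma, every non-codeword is a leaf, and because we deleted only one leaf per support vertex, two deleted leaves $u,v\in C'$ cannot share their unique neighbour; hence all non-codewords are still separated from one another and are dominated. So the only place the argument can fail is \emph{inside} $G[C]$: without the triangle-free or identifiability assumption on $G-L(G)$, two codewords could be closed twins within $C$.

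The key steps, in order, are as follows. Step one: verify domination and the separation of non-codewords exactly as in Lemma~\ref{LemSupportBip}; this is unchanged. Step two: separate codewords from non-codewords. Each surviving leaf that got deleted is gone, so every remaining codeword has its support vertex plus possibly more in its $I$-set, giving $|I(c)|\ge 2$ for codewords while a non-codeword leaf has $|I(v)|=1$; this distinguishes the two types just as before (using $n\ge3$ and that $G$ is not a single edge). Step three, the crux: handle pairs of codewords $c\ne c'$ that are \emph{not} separated by $C$. Such a pair must be a pair of closed twins in $G[C]$, i.e. $N[c]\cap C=N[c']\cap C$. Since $G$ itself is identifiable, $N[c]\ne N[c']$ in $G$, so the discrepancy between $N[c]$ and $N[c']$ lies entirely in the deleted leaves $C'$. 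I would argue that each such conflict can be resolved by returning a single well-chosen deleted leaf to the code, and that a single extra codeword suffices to resolve \emph{all} conflicts simultaneously, yielding $|C|+1=n-s(G)+1$.

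The hard part will be Step three: showing that one reinserted vertex breaks every remaining codeword tie at once. The natural idea is to analyse which codeword pairs can remain unseparated. If $c,c'$ are closed twins in $G[C]$ but not in $G$, their symmetric difference in $G$ consists of deleted leaves, and since a leaf has exactly one neighbour, each such leaf is adjacent to exactly one of $c,c'$ and forces both $c,c'$ to be support vertices adjacent to that leaf's support vertex — this rigidity should force all conflicting pairs to cluster around a single vertex (mirroring the $K_m\circ_1$ extremal example described just before the statement), so reinserting one deleted leaf adjacent to the relevant support vertex separates them. I would therefore examine the structure of the set of conflicting codeword pairs, show it behaves like a clique of support vertices attached to a common leaf-bearing vertex, and conclude that adding back one leaf from that neighbourhood resolves the whole cluster; the $K_m\circ_1$ computation in the preceding paragraphs then shows this $+1$ is unavoidable, so the bound is tight. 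The main obstacle is ruling out the possibility of two \emph{disjoint} clusters of conflicts, which would seem to require two extra codewords; I expect the identifiability of $G$ together with the leaf-degree constraint to preclude this, and verifying that exclusion carefully is where the real work lies.
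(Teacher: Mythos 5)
Your Steps 1 and 2 go through, but Step 3 --- the crux of your plan --- rests on a structural claim that is false, and the whole approach of ``fixed base code plus one reinserted vertex'' cannot be repaired. Concretely, take two disjoint triangles $a_1b_1c_1$ and $a_2b_2c_2$, join them by the edge $c_1c_2$, and attach a pendant leaf $x_i$ to $a_i$ for $i=1,2$. This graph $G$ is connected and identifiable, with $n=8$ and $s(G)=2$. Your construction forces $C'=\{x_1,x_2\}$ and $C=\{a_1,b_1,c_1,a_2,b_2,c_2\}$, and then $I(C;a_1)=I(C;b_1)=\{a_1,b_1,c_1\}$ while $I(C;a_2)=I(C;b_2)=\{a_2,b_2,c_2\}$: two conflicting pairs lying in disjoint parts of the graph. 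The only vertices available to add are $x_1$ and $x_2$, and each resolves exactly one of the two conflicts, so $C$ together with any single vertex is never an identifying code --- even though the theorem's bound $n-s(G)+1=7$ does hold for this graph (for instance $V(G)\setminus\{b_1\}$ works). Note also that in a conflicting pair only one vertex need be a support vertex ($b_1$ is not one), so the rigidity you hoped for, with all conflicts clustering around one vertex as in $K_m\circ_1$, simply fails; that example is misleading because there all the conflicting support vertices happen to lie in a single clique. Nor is this failure confined to the trivial base case $s(G)\leq 2$: attaching $k\geq 3$ such leafed triangles to a common center gives $s(G)=k$ pairwise disjoint conflicts, so the deficiency of the base code $V(G)\setminus C'$ can be arbitrarily large.

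The missing ingredient is the freedom to re-optimize the code away from the conflicts --- in the example above one must discard a non-leaf such as $b_1$ from the code while keeping its leaf, not merely append a vertex --- and this is exactly what the paper's proof buys by inducting on $n$. There, if $G-L(G)$ is identifiable, Lemma~\ref{LemSupportBip} (your construction, which then does work) already gives $\ID(G)\leq n-s(G)$; otherwise one picks closed twins $u,v$ of $G-L(G)$, observes that identifiability of $G$ forces one of them, say $u$, to be a support vertex, deletes $u$ together with its leaves $L_u$, applies the induction hypothesis to the connected identifiable graph $G_u=G-u-L_u$, and re-adds $L_u$ (plus a local codeword shift when $|L_u|=1$). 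The induction hypothesis lets the code on $G_u$ be chosen from scratch rather than inherited from a fixed global construction, which is precisely what your argument lacks. If you want a non-inductive proof, you would need an exchange argument showing that each disjoint conflict cluster can be fixed by swapping one non-leaf codeword for one leaf at no net cost --- at which point you are essentially re-deriving the paper's induction locally.
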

\begin{proof}
When $s(G)\leq 2$, the claim is clear since $\ID(G)\leq n-1$ for any connected identifiable graph on at least three vertices~\cite{GM07}. Hence, we may assume that $s(G)\geq3$ which implies $n\geq 2s(G)\geq6$, and so the claim is true when $n\leq 5$. Let us then prove the claim by induction on $n$. We thus assume that for any graph $G'$ of order $n'\leq n-1$, we have $\ID(G')\leq n'-s(G')+1$ and wish to prove it for graphs of order $n$. We know that the claim is true for $s(G)\leq 2$, thus we assume also that $s(G)\geq3$. If $G'=G-L(G)$ is identifiable, then $\ID(G)\leq n-s(G)$ by Lemma~\ref{LemSupportBip}. Thus, we may assume that $N_{G'}[u]=N_{G'}[v]$ for some distinct vertices $u,v$ in $V(G')$. Since $G$ is identifiable, we have $u$ or $v\in S(G)$. Assume that $u\in S(G)$ and let $L_u=N(u)\cap L(G)$. Let us form graph $G_u=G-u-L_u$. 

Observe that $G_u$ is connected since $N_{G'}[v]=N_{G'}[u]$. Moreover, since $s(G)\geq3$ and $G$ is identifiable, also $G_u$ is identifiable. Indeed, if $N_{G_u}[x]=N_{G_u}[y]$ for some vertices $y$ and $x$, then $u$ separates them in $G$, let us say, $u\in N_G[x]\setminus  N_G[y]$. However, we have $N_{G'}[u]=N_{G'}[v]$. Hence, $v\in N_{G_u}[x]\setminus  N_{G_u}[y]$, a contradiction, or $v=x$ and $y$ is a leaf in $G$ which is adjacent to $v$ but now $N_{G_u}[v]=N_{G_u}[y]$ and $G_u$ is a path on two vertices which is impossible since then $s(G)=2$. Notice that we have $s(G_u)\geq s(G)-1\geq2$, since we removed at most one support vertex. Since $s(G_u)\geq2$, $G_u$ has order at least~4.

By the induction hypothesis, $G_u$ has an identifying code of cardinality at most $(n-1-|L_u|)-s(G_u)+1\leq n-|L_u|-s(G)+1$. Let $C_u$ be an identifying code with such cardinality. Let us first consider the case with $|L_u|\geq2$. Now, we may take $C=C_u\cup L_u$ and it is an identifying code of $G$ with at most $n-s(G)+1$ vertices, and we are done. However, if $|L_u|=1$ and let us say that $u'$ is the leaf in $L_u$, then just by adding $u'$ to $C_u$, we might have $I(C;u)=I(C;u')$. Let us divide this into two cases. First assume that,  $I(C_u;v)\not\subseteq L(G)$, now $I(v)\cap N(u)\neq \emptyset$ and $C_u\cup\{u'\}$ is an identifying code in $G$ of cardinality at most $n-s(G)+1$, as required.  Hence, we may assume that $I(C_u;v)\subseteq L(G)$. In this case we have $|I(C_u;v)|=|N(v)\cap L(G)|\geq2$. Now, to form code $C$, we may shift one codeword from $N(v)\cap L(G)$ to $v$ and add $u$ to the code.  The resulting code is identifying in $G$ and has cardinality at most $n-s(G)+1$. \end{proof}

\subsection{A second upper bound using the number of leaves}

Now, we prove an upper bound for identifying codes in some bipartite graphs based on the number of leaves. In particular, this bound is an improvement for trees. Notice that the graphs in the following theorem contain $C_4$-free bipartite graphs.

\begin{theorem}\label{TheShift}
Let $G$ be a connected bipartite graph on $n\geq3$ vertices which does not have any twins of degree~2 or greater. We have $$\ID(G)\leq\frac{n+\ell(G)}{2}.$$
\end{theorem}
\begin{proof}
Let $G$ be a connected bipartite graph on $n\geq3$ vertices which does not have any twins of degree~2 or greater. Let us fix a non-leaf vertex $x\in V(G)\setminus L(G)$ as the root of the graph. We present the bipartite graph as a layered graph, so that vertex $u$ is in layer $i$ if $d(x,u)=i$. Observe that adjacent vertices have different distances to the root $x$ since $G$ is bipartite. Our goal is to construct two identifying codes, and to show that at least one of them has the claimed cardinality.

Let us first construct code $C'_e$ by choosing $u\in C'_e$ if $d(x,u)$ is even, or if $u\in L(G)$. Next we will shift some codewords to construct an identifying code $C_e$. Observe that if $u\in L(G)$ has an odd distance to root $x$, then there is an adjacent support vertex $v\in S(G)\cap I(C'_e;u)$. Let us first have $C_e=C'_e$. We modify code $C_e$ by shifting some codewords in leaves according to following rule. If $N(v)\cap L(G)=\{u\}$ and $d(x,u)$ is odd, then we remove $u$ from $C_e$ and we add some vertex $v'\in N(v)$ with $d(v',x)=d(v,x)-1$ to $C_e$ (if $v=x$, we instead add some non-leaf vertex adjacent to $x$ to $C_e$).  We illustrate codes $C_e'$ and $C_e$ together with the shift in the left graph of Figure~\ref{TreeExamp}. We next prove that $C_e$ is an identifying code in $G$.

If $w\not\in L(G)$ is in layer $i$ where $i$ is odd, then $w$ has at least two adjacent codewords and $N(w)\subseteq I(C_e;w)$. Thus, $w$ has a unique $I$-set since if $I(w')=I(w)$ for some vertex $w'\neq w$, then $w'$ is a non-leaf in an odd layer and $N(w')\subseteq I(w)$. Thus, $w$ and $w'$ are twins of degree at least~2, a contradiction. Let us then consider the case where $w\in L(G)$ and $d(w,x)$ is odd; assume $u\in S(G)$ is the adjacent support vertex. Now, if $|N(u)\cap L(G)|\geq2$, then $I(w)=\{w,u\}$ and $|I(u)|\geq3$. Thus, $I(w)$ is unique. If $|N(u)\cap L(G)|=1$, then $w\not\in C_e$ due to shifting and $I(w)=\{u\}$. However, $|I(u)|\geq2$ and hence, $I(w)$ is again unique. Let us then consider the case where $d(w,x)$ is even. Now, $w\in C_e$ and hence, if $I(w)=I(w')$, then $w'\in N(w)$. But then $d(w',x)$ is odd and thus, $I(w')$ is also unique by our earlier arguments. Thus, $C_e$ is an identifying code in $G$.

As the second code, we construct $C'_o$ similarly as we constructed $C_e'$, except that we choose vertices in odd layers, that is, we have $u\in C'_o$ if $d(x,u)$ is odd, or if $u\in L(G)$. Then, we again use the shifting to obtain the code $C_o$. This time, we shift some codewords away from some leaves in even layers. Let $u\in L(G)$ be a leaf with $d(u,x)$ even and $v\in S(G)\cap N(u)$. Thus, the distance between vertices $v$ and $x$ is odd and $v\in C'_o$. Moreover, let, again, $v'\in N(v)$ be the vertex adjacent to $v$ with $d(v',x)=d(v,x)-1$. Now, if $N(v)\cap L(G)=\{u\}$, then we remove $u$ from $C_o$ and add $v'$ to $C_o$. Codes $C_o'$ and $C_o$ are illustrated in the right graph of Figure~\ref{TreeExamp} and they can be compared with the codes $C_e'$ and $C_e$. The proof that the code $C_o$ is identifying is similar to the proof for $C_e$.

Thus, we have $\ID(G)\leq \min \{|C_e|,|C_o|\}\leq  \min \{|C'_e|,|C'_o|\}$. Moreover, $C'_e$ and $C'_o$ both contain every leaf and at least one of them contains at most half of the non-leaf vertices. Thus, $\ID(G)\leq\min \{|C'_e|,|C'_o|\}\leq \ell(G)+(n-\ell(G))/2=(n+\ell(G))/2.$
\end{proof}

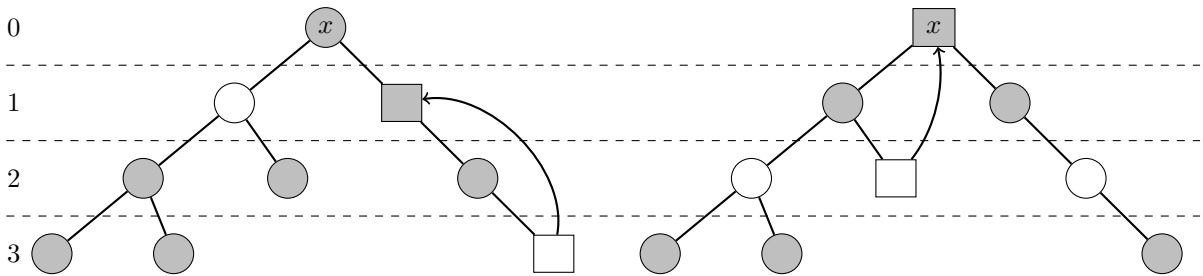
\begin{figure}[!htpb]
\centering
\begin{tikzpicture}

\draw[dashed] (-4.2,-0.5) -- (11.5,-0.5);
\draw[dashed] (-4.2,-1.5) -- (11.5,-1.5);
\draw[dashed] (-4.2,-2.5) -- (11.5,-2.5);

\node[code node](x) at (0,0)    {$ x $};
\node[main node](1) at (-1.2,-1) {};

\node[code node](11) at (-2.4,-2)    {};
\node[code node](12) at (-0.5,-2)  {};
\node[code node](111) at (-3.6,-3) {};
\node[code node](112) at (-2.0,-3) {};

\node[square code node](2) at (1,-1)  {};
\node[code node](21) at (2,-2)  {};
\node[square main node](211) at (3,-3)  {};

\node[] at (-4.1,-0) {$0$};
\node[] at (-4.1,-1) {$1$};
\node[] at (-4.1,-2) {$2$};
\node[] at (-4.1,-3) {$3$};

\path[draw,thick]
    (x) edge node {} (1)
    (1) edge node {} (11)
    (1) edge node {} (12)
    (11) edge node {} (111)
    (11) edge node {} (112)
    (x) edge node {} (2)
    (2) edge node {} (21)
    (21) edge node {} (211)
 (211)[->,bend right=55, looseness=0.9] edge node {} (2)  

    ;
\node[square code node2](Rx) at (8,0)    {$ x $};
\node[code node](R1) at (6.8,-1) {};

\node[main node](R11) at (5.6,-2)    {};
\node[square main node](R12) at (7.5,-2)  {};
\node[code node](R111) at (4.4,-3) {};
\node[code node](R112) at (6.0,-3) {};

\node[code node](R2) at (9,-1)  {};
\node[main node](R21) at (10,-2)  {};
\node[code node](R211) at (11,-3)  {};

\path[draw,thick]
    (Rx) edge node {} (R1)
    (R1) edge node {} (R11)
    (R1) edge node {} (R12)
    (R11) edge node {} (R111)
    (R11) edge node {} (R112)
    (Rx) edge node {} (R2)
    (R2) edge node {} (R21)
    (R21) edge node {} (R211)
    (R12)[->,bend right=25, looseness=0.85] edge node {} (Rx)  
    ;

\end{tikzpicture}\centering
\caption{Gray circle  and white square vertices form the codes $C'_e$ (left) and $C'_o$ (right). Arrows and squared vertices depict the shifts in the forming of identifying codes $C_e$ and $C_o$ which contain gray circled and squared vertices.}\label{TreeExamp}
\end{figure}

\subsection{Remarks and consequences}\label{Sec:Remarks}

Observe that the conditions (being bipartite and not having twins of degree~2 or greater) in the statement of Theorem~\ref{TheShift} are necessary. For example, we have $\ID(C_4)=\ID(C_5)=3$, and when $n\geq 7$ is odd, we have $\ID(C_n)=\lceil\frac{n}{2}\rceil+1$~\cite{GGNUV01}; for a complete bipartite graph $K_{k_1,k_2}$ of order $n$, with $k_1>k_2\geq2$, we have $\ID(K_{k_1,k_2})=n-2$.

The upper bound given by Theorem~\ref{TheShift} is tight for quite a rich class of graphs that includes many structurally different graphs. Those graphs include, for example, any path or even-length cycle with at least six vertices~\cite{BCHL04}, or any star on at least three vertices (for a star $S_n$ of order $n$, we have $\ID(S_n)=n-1$). The bound is also tight for any \emph{spider graph} where the length of each leg is odd (that is, a star whose edges are subdivided an even number of times) and $2$-coronas of bipartite graphs discussed in Theorem \ref{The:2/3}, as well as some other trees like the ones presented in Figure~\ref{TightTrees}. It seems difficult to obtain a full characterization of this family (even for trees), given the diversity of these examples.

Note that the same bound as the one in Theorem~\ref{TheShift} has been proved for trees, in~\cite{CS11}, for locating-total dominating sets, and the trees reaching the bound are characterized therein. (A \emph{locating-total dominating set} is a set $D$ of vertices such that each vertex of $G$ has a neighbour in $D$, and any two vertices not in $D$ are separated by $D$. In other words, it is a locating-dominating set and total dominating set.) However, the extremal families are not the same: for example, the trees of Figure~\ref{TightTrees} have locating-total dominating sets smaller than the bound (examples of such sets are formed by the square vertices). Moreover, the upper bound for locating-total dominating sets cannot be generalized in the same way to bipartite graphs since the cycle $C_6$ requires at least four codewords in the case of locating-total domination. 

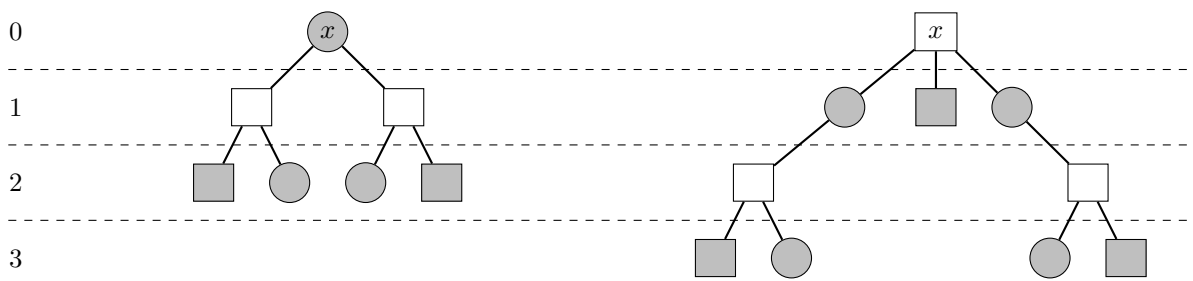
\begin{figure}[!htpb]
\centering
\begin{tikzpicture}

\draw[dashed] (-4.2,-0.5) -- (11.5,-0.5);
\draw[dashed] (-4.2,-1.5) -- (11.5,-1.5);
\draw[dashed] (-4.2,-2.5) -- (11.5,-2.5);

\node[code node](x) at (0,0)    {$ x $};
\node[square main node](1) at (-1,-1) {};

\node[square code node](11) at (-1.5,-2)    {};
\node[code node](12) at (-0.5,-2)  {};

\node[square main node](2) at (1,-1)  {};
\node[code node](21) at (0.5,-2)  {};
\node[square code node](22) at (1.5,-2)  {};

\node[] at (-4.1,-0) {$0$};
\node[] at (-4.1,-1) {$1$};
\node[] at (-4.1,-2) {$2$};
\node[] at (-4.1,-3) {$3$};

\path[draw,thick]
    (x) edge node {} (1)
    (1) edge node {} (11)
    (1) edge node {} (12)
    (x) edge node {} (2)
    (2) edge node {} (21)
    (2) edge node {} (22)

    ;
\node[square main node2](Rx) at (8,0)    {$ x $};
\node[code node](R1) at (6.8,-1) {};

\node[square main node](R11) at (5.6,-2)    {};
\node[square code node](R111) at (5.1,-3) {};
\node[code node](R112) at (6.1,-3) {};
\node[square code node](R0) at (8,-1)  {};

\node[code node](R2) at (9,-1)  {};
\node[square main node](R21) at (10,-2)  {};
\node[code node](R211) at (9.5,-3)  {};
\node[square code node](R212) at (10.5,-3)  {};

\path[draw,thick]
    (Rx) edge node {} (R1)
    (R1) edge node {} (R11)
    (Rx) edge node {} (R0)
    (R11) edge node {} (R111)
    (R11) edge node {} (R112)
    (Rx) edge node {} (R2)
    (R2) edge node {} (R21)
    (R21) edge node {} (R211)
    (R21) edge node {} (R212);
\end{tikzpicture}\centering
\caption{The gray vertices form optimal identifying codes in these two trees, whose sizes are equal to the bound presented in Theorem~\ref{TheShift}. The squared vertices form optimal locating-total dominating sets.}\label{TightTrees}
\end{figure}

We get the following corollary of Lemma~\ref{LemSupportBip} and Theorem~\ref{TheShift}.

\begin{corollary}\label{CorTreeBound}
Let $G$ be a connected bipartite graph on $n\geq5$ vertices which does not have any twins of degree~2 or greater. We have $$\ID(G)\leq\min\left\{\frac{n+\ell(G)}{2},n-s(G)\right\}.$$
\end{corollary}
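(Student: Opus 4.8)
The plan is to prove Corollary~\ref{CorTreeBound} as an immediate consequence of the two preceding results, Lemma~\ref{LemSupportBip} and Theorem~\ref{TheShift}, rather than by any fresh argument. The statement asserts an upper bound equal to the \emph{minimum} of two quantities, so the natural strategy is to verify that each of the two quantities is independently a valid upper bound for $\ID(G)$ under the hypotheses, and then observe that $\ID(G)$, being bounded above by both, is bounded above by their minimum. The key point to check is that the hypotheses of the corollary (connected bipartite, $n\geq 5$, no twins of degree~2 or greater) are strong enough to trigger each of the two cited results.

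First I would handle the term $(n+\ell(G))/2$. This is exactly the conclusion of Theorem~\ref{TheShift}, whose hypotheses are ``connected bipartite on $n\geq 3$ vertices with no twins of degree~2 or greater.'' Since our corollary assumes $n\geq 5$ (hence $n\geq 3$) together with the same bipartiteness and twin-freeness conditions, Theorem~\ref{TheShift} applies verbatim and gives $\ID(G)\leq (n+\ell(G))/2$.

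Next I would handle the term $n-s(G)$, which comes from Lemma~\ref{LemSupportBip}. Here a small amount of care is needed, since the lemma is stated for $\IDt(G)$ (the total dominating identifying code number) and requires that $G-L(G)$ be identifiable or that $G$ be triangle-free, as well as $n\geq 4$ and $G\neq P_4$. The main obstacle, such as it is, lies precisely in confirming that these side conditions are met. Bipartite graphs are triangle-free, so the triangle-free alternative in the hypothesis of Lemma~\ref{LemSupportBip} is automatically satisfied; the condition $n\geq 5$ ensures both $n\geq 4$ and $G\neq P_4$. Thus Lemma~\ref{LemSupportBip} yields $\IDt(G)\leq n-s(G)$, and since every total dominating identifying code is in particular an identifying code, we have $\ID(G)\leq \IDt(G)\leq n-s(G)$.

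Finally, combining the two displayed inequalities gives $\ID(G)\leq \min\{(n+\ell(G))/2,\ n-s(G)\}$, which is the assertion. I expect no genuine difficulty here: the only thing to get right is matching the numerical hypothesis $n\geq 5$ to the slightly different thresholds appearing in the two source results and noting the passage from $\IDt$ to $\ID$ via the trivial inequality $\ID(G)\leq\IDt(G)$. The proof should therefore be only a couple of sentences long.
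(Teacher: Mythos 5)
Your proposal is correct and matches the paper's own derivation: the paper states Corollary~\ref{CorTreeBound} without a written proof, presenting it precisely as the combination of Theorem~\ref{TheShift} (applied directly) and Lemma~\ref{LemSupportBip} (applicable because bipartite graphs are triangle-free, and $n\geq 5$ rules out both $n\leq 3$ and $P_4$), together with the trivial inequality $\ID(G)\leq\IDt(G)$. Nothing is missing; your hypothesis-checking is exactly the content the paper leaves implicit.
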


\section{An application to twin-free bipartite graphs}\label{sec:twinfree}

We next apply our bounds to obtain upper bounds for twin-free graphs. Similar upper bounds have been studied in the context of location-domination, see~\cite{FH16,Heia,GGM14}.

\begin{corollary}\label{cor:2n/3}
Let $G$ be a twin-free bipartite graph on $n\geq3$ vertices that is not $P_4$. Then $$\ID(G)\leq\frac{2n}{3}.$$
\end{corollary}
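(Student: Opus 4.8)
The plan is to combine the two upper bounds already available, namely Theorem~\ref{TheShift} giving $\ID(G)\leq(n+\ell(G))/2$ and Lemma~\ref{LemSupportBip} giving $\IDt(G)\leq n-s(G)$ (which of course also bounds $\ID(G)$), and to show that for a twin-free bipartite graph the minimum of these two quantities is always at most $2n/3$. The key observation is that the two bounds pull in opposite directions: the first is small when $\ell(G)$ is small, while the second is small when $s(G)$ is large, and in a twin-free graph a large number of leaves forces a comparable number of support vertices. So the heart of the argument is a trade-off inequality relating $\ell(G)$ and $s(G)$.

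First I would record the crude case distinction on the number of leaves. If $\ell(G)\leq n/3$, then Theorem~\ref{TheShift} immediately yields $\ID(G)\leq (n+n/3)/2=2n/3$, and we are done. So the remaining work is entirely in the regime $\ell(G)>n/3$, where we want to exploit Lemma~\ref{LemSupportBip}. The crucial structural fact to extract from twin-freeness is that \emph{each support vertex has exactly one leaf neighbour}: if a support vertex $v$ had two leaf neighbours $u_1,u_2$, then $u_1$ and $u_2$ would be open twins (both with open neighbourhood $\{v\}$), contradicting twin-freeness. Hence $\ell(G)=s(G)$ in a twin-free graph. (One should double-check the degenerate small cases and that $G$ is not $P_4$, which is excluded, and verify $n\geq 5$ so that Lemma~\ref{LemSupportBip} applies; the cases $n\in\{3,4\}$ can be checked directly.)

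With $\ell(G)=s(G)$ in hand, Lemma~\ref{LemSupportBip} reads $\ID(G)\leq n-\ell(G)$. Combining with Theorem~\ref{TheShift}, I would write
\[
\ID(G)\leq\min\left\{\frac{n+\ell(G)}{2},\ n-\ell(G)\right\}.
\]
The first expression increases in $\ell(G)$ and the second decreases in $\ell(G)$; they are equal when $(n+\ell)/2=n-\ell$, i.e.\ when $\ell=n/3$, at which point both equal $2n/3$. Therefore the minimum never exceeds $2n/3$, which gives the desired bound in all cases. Concretely, if $\ell(G)\leq n/3$ use the first term and if $\ell(G)\geq n/3$ use the second term.

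I expect the main obstacle to be handling the hypotheses of Lemma~\ref{LemSupportBip} cleanly rather than the inequality itself. The lemma requires $n\geq 4$, that $G\neq P_4$, and that $G-L(G)$ be identifiable or $G$ be triangle-free; since our $G$ is bipartite it is triangle-free, so that clause is automatic, but the small-order and $P_4$ exclusions, together with the corollary's own hypothesis $n\geq 3$, need to be reconciled. So the careful part is verifying that for every twin-free bipartite $G$ with $n\geq 3$ and $G\neq P_4$ at least one of the two bounds is legitimately applicable (in particular confirming the identity $\ell(G)=s(G)$ and that $s(G)$ is well-defined even when there are no leaves, where $\ell(G)=s(G)=0$ and the first bound alone already gives $n/2\leq 2n/3$). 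Once the bookkeeping on these boundary conditions is settled, the trade-off computation is routine.
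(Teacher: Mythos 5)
Your proposal is correct and is essentially the paper's own proof: the paper likewise derives $s(G)=\ell(G)$ from twin-freeness, invokes Corollary~\ref{CorTreeBound} (which is exactly the combination of Theorem~\ref{TheShift} and Lemma~\ref{LemSupportBip} that you assemble by hand), and splits on $\ell(G)\leq n/3$ versus $\ell(G)\geq n/3$. Your extra care with the small-order cases $n\in\{3,4\}$ and the hypotheses of Lemma~\ref{LemSupportBip} is a minor bookkeeping refinement, not a different argument.
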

\begin{proof}
Since $G$ is twin-free, we have $s(G)=\ell(G)$. Hence, Corollary~\ref{CorTreeBound} gives $\ID(G)\leq \min\{(n+\ell(G))/2,n-\ell(G)\}$. We first assume that $\ell(G)\leq n/3$. Then, $(n+\ell(G))/2\leq 2n/3$. Now, assume that $\ell(G)\geq n/3$. In that case, $n-\ell(G)\leq 2n/3$. Thus, the claim follows.\end{proof}

Note that the conditions in the statement of Corollary~\ref{cor:2n/3} are best possible, in the sense that there exist twin-free non-bipartite graphs that need $n-1$ vertices in any of their identifying codes, such as the complements of half-graphs, see~\cite{FGKNPV1l}. Those graphs have very large cliques, but there are twin-free graphs with small clique number that also have large identifying codes: for any $\Delta\geq 3$, arbitrarily large twin-free graphs of order $n$, maximum degree~$\Delta$ and optimal identifying codes of size $\frac{(\Delta-1)n}{\Delta}$ have been presented in~\cite{FP11}. The condition on twin-freeness cannot be relaxed either, as stars of order $n$ also have no identifying code smaller than $n-1$, and all other complete bipartite graphs (except $C_4$) need $n-2$ vertices in their identifying codes. Moreover, this bound does not hold for non-bipartite graphs of girth at least~5, since $\ID(C_7)=5$. 

We next show that the bound of Corollary~\ref{cor:2n/3} is tight.

\begin{proposition}\label{prop:coronas}
Let $H$ be any connected graph of order at least~$2$. Then, the 2-corona $H\circ_2$ is twin-free and $\ID(H\circ_2)=2n/3$, where $n$ is the order of $H\circ_2$.
\end{proposition}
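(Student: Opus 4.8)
The plan is to prove the two assertions separately: that $H\circ_2$ is twin-free, and that its identification number is exactly $2n/3$. Write $V(H)=\{v_1,\dots,v_k\}$, so that the $2$-corona attaches to each $v_i$ a path $v_i - a_i - b_i$, where $b_i$ is a leaf, $a_i$ its support vertex, and the order is $n=3k$. First I would verify twin-freeness by a short case analysis on where a hypothetical twin pair could lie. Each $b_i$ is a leaf with $N[b_i]=\{a_i,b_i\}$, so two leaves $b_i,b_j$ have distinct closed neighbourhoods; each support vertex $a_i$ has $N[a_i]=\{v_i,a_i,b_i\}$ with the private leaf $b_i$; and two vertices of $H$ have distinct closed neighbourhoods because each $v_i$ has its private attached vertex $a_i$. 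Open twins are ruled out analogously, using that distinct attached paths have disjoint vertex sets. Because the graph is connected of order $\geq 6$ and triangle-free issues do not arise for the identifiability itself, this part is routine.

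For the upper bound $\ID(H\circ_2)\leq 2n/3$, I would simply invoke Corollary~\ref{cor:2n/3}: since $H\circ_2$ is twin-free and (one must check) bipartite, and is not $P_4$, the corollary gives $\ID(H\circ_2)\leq 2n/3$ directly. Bipartiteness of $H\circ_2$ is \emph{not} automatic, however, since $H$ itself may contain odd cycles; so here I expect a subtlety. If $H$ is not bipartite, one cannot cite Corollary~\ref{cor:2n/3} as stated, and I would instead exhibit an explicit identifying code of size $2k=2n/3$ and argue it works regardless of the structure of $H$. A natural candidate is $C=\{a_i,b_i : 1\leq i\leq k\}$, i.e.\ all the attached paths and none of the vertices of $H$. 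This code dominates every vertex (each $v_i$ sees $a_i$, each $a_i$ and $b_i$ are themselves in $C$), and it separates pairs: the $I$-sets are $I(v_i)=\{a_i\}$, $I(a_i)=\{a_i,b_i\}$, $I(b_i)=\{a_i,b_i\}$ — so $a_i$ and $b_i$ collide. To fix this I would instead take $C=\{v_i\text{ replaced carefully}\}$; more cleanly, use $C=\{a_i,b_i\}$ but note the collision and repair it, or directly take the code consisting of all $a_i$ together with one of $\{v_i,b_i\}$ per index chosen to break ties. The robust choice is $C=\{a_i : i\}\cup\{b_i : i\}$ minus the collisions resolved by swapping $b_i\mapsto v_i$ when needed; verifying an explicit size-$2k$ code is the cleanest route and sidesteps bipartiteness entirely.

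For the matching lower bound $\ID(H\circ_2)\geq 2n/3 = 2k$, which I expect to be the main obstacle, I would argue locally on each attached path. The key observation is that each triple $\{v_i,a_i,b_i\}$ must contribute at least two codewords. Indeed, to dominate the leaf $b_i$ we need $a_i$ or $b_i$ in the code, and to \emph{separate} $b_i$ from $a_i$ we need a codeword in $N[b_i]\triangle N[a_i]=\{b_i\}\cup\{v_i\}$ — that is, either $b_i\in C$ or $v_i\in C$. Combining these domination and separation requirements forces at least two of the three vertices $v_i,a_i,b_i$ into any identifying code (a short case check: if only one of the three is a codeword, either $b_i$ is undominated or $a_i,b_i$ are not separated). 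Summing over all $k$ indices gives $|C|\geq 2k=2n/3$. The technical care here is to make the counting disjoint across indices, which holds because the attached paths are vertex-disjoint and the only shared separation uses $v_i$, which belongs uniquely to triple $i$. Together with the explicit upper-bound code this yields $\ID(H\circ_2)=2n/3$.
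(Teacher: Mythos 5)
Your skeleton is the same as the paper's (a per-triple forcing argument for the lower bound, an explicit size-$2k$ code for the upper bound; the paper likewise does \emph{not} invoke Corollary~\ref{cor:2n/3}, for exactly the bipartiteness reason you spotted), but both halves of your write-up have a concrete defect. In the lower bound, the symmetric difference is miscomputed: since $b_i$ lies in \emph{both} $N[a_i]=\{v_i,a_i,b_i\}$ and $N[b_i]=\{a_i,b_i\}$, we have $N[a_i]\triangle N[b_i]=\{v_i\}$, not $\{b_i\}\cup\{v_i\}$. This is not cosmetic: under your stated condition ("either $b_i\in C$ or $v_i\in C$ separates"), the single codeword $b_i$ satisfies both your domination and your separation requirement, so your conditions only force \emph{one} codeword per triple and the bound $|C|\geq 2k$ does not follow. (Your parenthetical case check asserts the correct fact -- only $b_i$ in the code leaves $a_i,b_i$ unseparated -- but it contradicts the separation condition you wrote down.) The corrected computation gives a stronger and simpler statement, which is exactly the paper's argument: $v_i$ is the \emph{unique} vertex separating $a_i$ from $b_i$, hence $v_i$ lies in every identifying code, and domination of $b_i$ forces one of $a_i,b_i$; since the triples $\{v_i,a_i,b_i\}$ partition $V(H\circ_2)$, the disjointness of the count is automatic.

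In the upper bound you never actually settle on a verified code: "swap $b_i\mapsto v_i$ when needed" leaves the construction unresolved, and the resolution is not a matter of taste. The collision $I(a_i)=I(b_i)$ occurs at \emph{every} index (again because only $v_i$ can separate $a_i$ from $b_i$), so "when needed" means "always"; moreover the per-index alternative $\{v_i,b_i\}$ is genuinely unsafe, since if $H$ has closed twins (e.g.\ $H=K_3$) the code $\bigcup_i\{v_i,b_i\}$ gives $I(v_i)=N_H[v_i]=N_H[v_j]=I(v_j)$ for the twin pair. The choice that works for every connected $H$ of order at least~$2$ is $C=\bigcup_i\{v_i,a_i\}$ (the paper's "$v$ and $v_1$" code): then $I(b_i)=\{a_i\}$, $I(a_i)=\{v_i,a_i\}$, and $I(v_i)=N_H[v_i]\cup\{a_i\}$ has size at least~$3$ and contains $a_i$ but no $a_j$ with $j\neq i$, so all $I$-sets are nonempty and pairwise distinct. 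Those three lines are what is missing; adding them and correcting the symmetric difference turns your proposal into the paper's proof. (Your twin-freeness check is routine and correct, and is in fact more explicit than the paper, which omits it.)
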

\begin{proof}
For any vertex $v$ of $H$, let $v_1$ be the vertex adjacent to $v$ that was not in $H$, and let $v_2$ be the leaf adjacent to $v_1$. To separate $v_1$ from $v_2$, $v$ needs to belong to any identifying code of $H\circ_2$. Moreover, to dominate $v_2$, one of $v_1,v_2$ needs to belong to any identifying code. This shows that $\ID(H\circ_2)\geq 2n/3$.

For the upper bound, one can consider the set containing $v$ and $v_2$ for each vertex $v$ of $H$: this is an identifying code of $H\circ_2$. If $H$ has at least three vertices, the set containing $v$ and $v_1$ for each vertex $v$ of $H$ also works.
\end{proof}

In the following theorem, we characterize all the twin-free 
bipartite graphs achieving the upper bound of $2n/3$ of Corollary~\ref{cor:2n/3} by showing that they are exactly the $2$-coronas of bipartite graphs.

\begin{theorem}\label{The:2/3}
Let $G$ be a connected twin-free bipartite graph on $n$ vertices with $\ID(G)=2n/3$. Then $G$ is the 2-corona $H\circ_2$ of some bipartite graph $H$.
\end{theorem}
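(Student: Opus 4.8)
The goal is to characterize the connected twin-free bipartite graphs $G$ with $\ID(G)=2n/3$ as exactly the $2$-coronas of bipartite graphs. Proposition~\ref{prop:coronas} already gives one direction; so the work is to show that if $G$ is extremal, then it \emph{must} decompose as $H\circ_2$ for some bipartite $H$. My plan is to analyze the structure forced by the tightness of the bound in Corollary~\ref{cor:2n/3}. Recall that the proof of that corollary uses $s(G)=\ell(G)$ (true since $G$ is twin-free) together with Corollary~\ref{CorTreeBound}, which gives $\ID(G)\le\min\{(n+\ell(G))/2,\,n-\ell(G)\}$. For the minimum of these two quantities to equal $2n/3$, both must be \emph{at least} $2n/3$, which forces $\ell(G)=n/3$ exactly: if $\ell(G)<n/3$ then $(n+\ell(G))/2<2n/3$, and if $\ell(G)>n/3$ then $n-\ell(G)<2n/3$. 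So the very first step is to extract $\ell(G)=s(G)=n/3$, meaning the leaves and support vertices each comprise exactly one third of the vertices.

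The next step is to understand how leaves and support vertices are distributed. Since $s(G)=\ell(G)$ and $G$ is twin-free, each support vertex has exactly one leaf neighbour (if a support vertex had two leaves, those leaves would be open twins, contradicting twin-freeness), so the leaves and support vertices are in bijection via adjacency, and $|L(G)|=|S(G)|=n/3$. Removing $L(G)\cup S(G)$ leaves a remaining set $R$ of size $n/3$. I would argue that the extremal condition forces each support vertex to have exactly one non-leaf neighbour as well, and that this neighbour lies in $R$; in other words, each support vertex has degree exactly~$2$, sitting on a pendant path of length~$2$ attached to a vertex of $R$. The natural way to get this is to revisit where the bound $n-\ell(G)$ (from Lemma~\ref{LemSupportBip}, giving $\IDt(G)\le n-s(G)$) can be loose: the code constructed there is $V(G)\setminus C'$ where $C'$ picks one leaf per support vertex, and any additional economy (for instance if some support vertex could be dropped, or if $G[V\setminus L]$ admitted a smaller identifying code) would push $\ID(G)$ strictly below $2n/3$. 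Pinning down that no such economy is possible should force the rigid ``one vertex of $R$, one support, one leaf'' grouping.

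Having shown the vertex set partitions into $n/3$ disjoint triples, each an induced path $r$–$v$–$u$ (with $r\in R$, $v$ a support vertex, $u$ its unique leaf), I would then let $H=G[R]$ and verify that $G=H\circ_2$. This requires checking that (i) every $r\in R$ carries exactly one such pendant $P_2$, i.e. the map from support vertices to their $R$-neighbour is also a bijection, and (ii) there are no edges between distinct triples except inside $H=G[R]$. Part (i) again comes from the counting $|R|=|S(G)|=n/3$ together with connectivity: if two support vertices attached to the same $r\in R$, some other vertex of $R$ would have no pendant path, and I would derive a smaller identifying code (one can then avoid placing a codeword on that pendant-free vertex), contradicting extremality. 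Part (ii) follows because $G$ is bipartite and twin-free, so the leaf $u$ has only $v$ as a neighbour and $v$ has only $u$ and $r$; finally $H=G[R]$ inherits bipartiteness from $G$, completing $G=H\circ_2$.

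\textbf{Main obstacle.} The delicate step is the rigidity argument in the second and third paragraphs: translating the single numerical equality $\ID(G)=2n/3$ into the fully rigid local structure (every support vertex has degree exactly~$2$ and its non-leaf neighbour lies in a distinct $R$-vertex). The inequalities in Corollary~\ref{CorTreeBound} are proved by explicit code constructions, so I expect the cleanest route is a contradiction argument: assume the structure deviates from a disjoint union of pendant $P_2$'s attached to distinct $R$-vertices, and exhibit an identifying code strictly smaller than $2n/3$ by locally improving one of the two codes $C_e,C_o$ from the proof of Theorem~\ref{TheShift} (or the code $V(G)\setminus C'$ from Lemma~\ref{LemSupportBip}). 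Making sure such a local improvement always preserves the identifying property — particularly that separation of the remaining pairs is not destroyed — is the part that will require the most care.
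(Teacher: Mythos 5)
Your plan follows the same overall route as the paper's proof: tightness in Corollary~\ref{CorTreeBound} forces $\ell(G)=s(G)=n/3$, hence an equal tripartition into leaves, support vertices and the remaining ``central'' vertices, after which the rigid $P_2$-pendant structure is to be forced by showing that any deviation yields an identifying code of size $2n/3-1$. But the heart of the proof is precisely the part you defer with ``should force'' and ``I would derive a smaller identifying code'', and that part is a genuine gap, not a routine verification. The natural candidate code, $C'=V(G)\setminus(L(G)\cup\{v\})$ for a central vertex $v$, is in general \emph{not} identifying: whenever $G-v-L(G)$ contains a two-vertex component $\{w,w'\}$ (necessarily a central neighbour $w$ of $v$ together with a support vertex $w'$, since bipartiteness rules out two central vertices forming such a component), the pair $w,w'$ gets the identical $I$-set $\{w,w'\}$; worse, a support vertex whose only non-leaf neighbour is $v$ would receive the same $I$-set as its own leaf. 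The paper repairs the first failure by shifting, for every such two-vertex component, the codeword from the support vertex $w'$ to its adjacent leaf, and then re-checks identification; the second failure is avoided not by repair but by a careful \emph{choice} of $v$. Without these two devices, the contradiction you aim for simply does not go through.

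Concretely, three separate local-improvement arguments are needed, and your proposal supplies none of them. First, if some central vertex has no adjacent support vertex, delete it from the all-non-leaves code and apply the shifting repair (this is where your ``pendant-free vertex'' idea must be made to work). Second, if some support vertex has two central neighbours, a pigeonhole count ($n/3$ central vertices versus at most $n/3-1$ supports with a unique central neighbour) produces a central vertex $v$ all of whose adjacent supports have another central neighbour; only for such a $v$ does deleting it keep $|I(v)|\geq 2$ and keep every adjacent support separated from its leaf after shifting. Third, your claim that each support vertex's second neighbour lies in $R$ silently excludes support--support edges, which need their own argument (again: delete the central neighbour of one of the two adjacent supports and shift). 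Your final assembly step, $H=G[R]$ and $G=H\circ_2$, and the bijection counting in your part (i), are sound and match the paper, but as written the proposal proves the easy bookkeeping and leaves open exactly the constructions that constitute the proof.
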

\begin{proof}
Since $G$ is twin-free, we have $s(G)=\ell(G)$. Together with Corollary \ref{CorTreeBound} and the fact that $\ID(G)=2n/3$, this means that $V(G)$ can be partitioned into three equal-sized parts: $$s(G)=\ell(G)=|V(G)\setminus(L(G)\cup S(G))|=n/3.$$ We will call the vertices in $V(G)\setminus(L(G)\cup S(G))$ \textit{central vertices}.

Let us first assume that $v$ is a central vertex without an adjacent support vertex. Notice that $v$ has at least two adjacent central vertices. Let us now construct code $C'=V(G)\setminus (L(G)\cup\{v\})$. Clearly, $v$ and each leaf are distinguished. Moreover, since $G$ is bipartite, all the codewords are distinguished (in other words, $G-\{v\}-L(G)$ is identifiable), unless there exists at least one 2-vertex component $w, w'$ in $G-\{v\}-L(G)$. One of $w,w'$ must be a central vertex adjacent to $v$, and the other, a support vertex. Assume that $w\in N(v)$ and let $w'$ be the support vertex in $G$. Now, we may just shift the codeword from $w'$ to the leaf adjacent to $w'$ in $G$. Moreover, we repeat this shifting for every 2-vertex component of $G-\{v\}-L(G)$. Since $v$ has at least two code neighbours, the code $C$ we get in this way is identifying in $G$ and $|C|=2n/3-1$, a contradiction. Hence, we may next assume that each central vertex is adjacent to a support vertex.

Let us now assume, that there exists a support vertex $u$ such that it has at least two adjacent central vertices $u_1$ and $u_2$. Then, we have $n/3$ central vertices but at most $n/3-1$ support vertices with exactly one adjacent central vertex. Hence, there exists a central vertex $v$ such that each support vertex adjacent to $v$ has also another adjacent central vertex as a neighbour.
Let us again choose $C'=V(G)\setminus (L(G)\cup\{v\})$. Notice that $|I(v)|\geq2$, thus $v$ and each leaf are distinguished. Moreover, we may apply the same argument on $G-\{v\}-L(G)$ as above and we may form $C$ in the same way.
Again, $C$ is identifying in $G$ and it has cardinality $2n/3-1$, a contradiction. 

Therefore, each support vertex is adjacent to exactly one central vertex and each central vertex is adjacent to exactly one support vertex. Finally we only need to show that no two support vertices are adjacent. If $u,w\in S(G)$, $d(u,w)=1$ and the central vertex adjacent to $u$ is $v$, then the code $G-\{v\}-L(G)$ is again identifying in $G$, unless there exists a component of size two in  $G-\{v\}-L(G)$ but in that case we can apply the earlier argument of shifting a codeword from a support vertex to a leaf to get an identifying code.
This means that $(G-L(G)-S(G))\circ_2=G$ and the claim follows.\end{proof}

\section{Graphs of girth at least~5}\label{sec:girth5}

In this section we prove our upper bound for graphs of girth at least~5, which generalizes Theorem~\ref{The:ID5/7delta}. It is natural to study these graphs here since bipartite graphs of girth at least $5$ are contained in Theorem~\ref{TheShift} and trees can be considered as graphs with unbounded girth. Notice that the new upper bound is tight for $C_7$ and stars. Moreover, it is the best possible bound, using only the order of graph $n$ and the number of leaves $\ell(G)$, in the sense that every non-leaf vertex has to increase the upper bound by $5/7$ as witnessed by the graph $C_7$ and each leaf has to increase the upper bound by $1$ as we have seen in the case of star graphs. Besides a new upper bound, we also present a new infinite family of graphs of girth at least $5$ which has large identifying codes.

\begin{theorem}\label{The:ID5/7}
Let $G$ be an identifiable graph of order $n$ with girth at least $5$ without isolated vertices. Then $\ID(G)\leq \frac{5n+2\ell(G)}{7}$.
\end{theorem}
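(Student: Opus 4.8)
The plan is to prove Theorem~\ref{The:ID5/7} by reducing the problem, as far as possible, to the leaf-free case covered by Theorem~\ref{The:ID5/7delta}, and then paying a controlled price of exactly one codeword per leaf. The target bound can be rewritten as $\frac{5(n-\ell(G))}{7}+\ell(G)$, which strongly suggests the strategy: the $n-\ell(G)$ non-leaf vertices should contribute at most a $5/7$ share, while each of the $\ell(G)$ leaves contributes a full unit. So first I would set $L=L(G)$, $S=S(G)$, and consider the graph $G'=G-L$ obtained by deleting all leaves. Since $G$ has girth at least $5$, so does $G'$, and every vertex of $G'$ that was a support vertex may have lost neighbours; the delicate point is that $G'$ need not have minimum degree $\geq 2$ (support vertices can become new leaves) and need not be connected or identifiable, so Theorem~\ref{The:ID5/7delta} cannot be applied as a black box.

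The key steps, in order, are as follows. First, handle trivial and base cases (small $n$, stars, and the case $\ell(G)=0$, where the statement is weaker than Theorem~\ref{The:ID5/7delta} applied componentwise after checking $\delta\geq 2$). Second, set up an induction on the order $n$: pick a support vertex $u$ with leaf-neighbours $L_u=N(u)\cap L$, delete $u$ together with $L_u$ (and possibly isolated remnants) to form a smaller girth-$5$ graph $G^\ast$, apply the induction hypothesis to $G^\ast$, and then reinsert the deleted vertices. The accounting I expect to work is that reinstating $u$ and the $|L_u|$ leaves increases the code by at most $1+|L_u|$ while increasing $n$ by $1+|L_u|$ and $\ell$ by $|L_u|-(\text{leaves of }u\text{ that were already counted})$; the arithmetic $\frac{5}{7}(1)+|L_u|\cdot 1$ then matches the budget $\frac{5\cdot 1+2|L_u|}{7}+\ldots$ after regrouping, provided one puts $u$ and all but one leaf of $u$ into the code and uses the remaining leaf for separation. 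The main work is verifying identification: the inserted leaves are pairwise separated by themselves once they are codewords, $u$ is separated from its leaves because $u$ has further neighbours in $G^\ast$ or an extra leaf codeword is available, and no conflicts are created between old and new vertices because girth $\geq 5$ forbids short cycles that could equalize $I$-sets.

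The hard part will be the boundary between the induction and the leaf-free bound: when we strip leaves, newly created low-degree vertices in $G'=G-L$ may fail the $\delta\geq 2$ hypothesis of Theorem~\ref{The:ID5/7delta}, and repeatedly deleting them could cascade. I would control this by arguing that each cascade step either is a support vertex (whose leaf pays its own unit via the $\ell$-term) or can be folded into the induction without exceeding the $\frac{5}{7}$ rate, using that in a girth-$5$ graph the neighbourhoods are sparse enough (no two vertices share two common neighbours) that local modifications do not propagate identification failures. A clean way to organize this is to prove the bound directly by a discharging or weighting argument: assign weight $5/7$ to every non-leaf and weight $1$ to every leaf, and construct the code greedily by processing support vertices first (absorbing their leaves at cost $1$ each and the support vertex at its $5/7$ share) and then invoking Theorem~\ref{The:ID5/7delta} on the leaf-free, minimum-degree-$\geq 2$ core that remains. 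The central obstacle is thus not the arithmetic, which is designed to balance, but ensuring that the code produced on the core and the codewords added for leaves and support vertices \emph{jointly} identify all of $G$, i.e.\ that a leaf is never confused with its support vertex and that support vertices are never confused with core vertices; girth $\geq 5$ is exactly the hypothesis that makes these separations automatic, and the proof will hinge on exploiting it at each junction.
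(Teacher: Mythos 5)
Your proposal has the right opening moves (rewriting the bound as $5(n-\ell(G))/7+\ell(G)$, inducting on $n$, and falling back on Theorem~\ref{The:ID5/7delta} for the leaf-free part), but the core mechanism --- delete a support vertex $u$ together with its leaves $L_u$, apply induction, and reinsert at cost $1+(|L_u|-1)$ codewords --- has a genuine arithmetic gap, precisely at the cascading issue you flag but do not resolve. Deleting $u$ and $L_u$ removes $1+|L_u|$ vertices and $|L_u|$ leaves, which buys you slack $5/7$, but every neighbour of $u$ of degree~$2$ becomes a \emph{new} leaf of $G^\ast$, and each new leaf worsens the inductive bound by $1-5/7=2/7$. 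So if $u$ has three or more degree-$2$ neighbours the budget is exceeded: concretely, take $u$ with one pendant leaf and three degree-$2$ neighbours, each leading into its own $5$-cycle; then $\ell(G^\ast)=\ell(G)+2$, the induction hypothesis gives $|C^\ast|\leq 5(n-\ell(G))/7+\ell(G)-6/7$, and adding even the single codeword $u$ overshoots the target by $1/7$. Worse, $G^\ast$ can fail to be identifiable (it may contain $P_2$ components, e.g.\ a spider with legs of length~$2$), so the induction hypothesis cannot even be invoked. Your fallback suggestions (discharging, ``folding the cascade into the induction'') are exactly where the difficulty lives, and they are not worked out; note also that trees are in the scope of the theorem (they have no cycles at all), and for a tree your ``minimum-degree-$2$ core'' is empty, so the greedy-plus-core scheme degenerates entirely.

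The paper escapes this by a structurally different decomposition: instead of peeling one support vertex at a time, it locates a cut-edge $vu$ where $v$ lies on a cycle (or joins two cycles) and the component $T_u$ of $G-vu$ containing $u$ is an entire maximal pendant tree. It then applies Theorem~\ref{TheShift} to $T_u$ --- crucially, the tree bound $(n_u+\ell(T_u))/2$ is at most $(5n_u+2\ell(T_u))/7$, so handling all pendant-tree vertices at the $1/2$-rate rather than the $5/7$-rate creates the slack that absorbs new-leaf effects --- and uses induction only on $G-T_u$, whose leaf count is controlled because $v$ stays on a cycle. (Graphs that are bipartite, which includes all trees, are dispatched up front by Theorem~\ref{TheShift}.) The second thing your proposal underestimates is the junction identification problem: when both the code on $T_u$ and the code on $G-T_u$ meet their bounds with equality, a conflict $I(u)=I(v)$ cannot be fixed by adding a codeword, because there is no budget left. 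The paper's Cases 4 and 5 resolve this by opening the black box of Theorem~\ref{TheShift}: its proof produces \emph{two} structurally different codes ($C_e$ and $C_o$), and one argues that a suitable one of them avoids the conflict; the pendant-$P_2$ case needs a further analysis of degree-$2$ paths hanging off $v$. Girth at least~$5$ alone does not make these separations ``automatic,'' so this is a missing idea, not just a missing verification.
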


\begin{proof}
It is sufficent to prove the claim for connected graphs as each connected component can be considered independently. Note that a graph of girth at least~5 is identifiable if and only if no connected component is a $P_2$. Thus, let $G$ be a connected identifiable graph on $n$ vertices of girth at least $5$ without isolated vertices.

We prove the claim by induction. Note that the upper bound can be written as $(n-\ell(G))5/7+\ell(G)$, which will be used in this way in the proof. Assume first that $n=3$ ($n\geq 3$ since $G$ is identifiable and is not an isolated vertex). In this case, $G$ is $P_3$ and we have $\ID(G)=2$. Let us then assume that the claim holds for all $n$ with $n\leq k$ and let us consider $n=k+1$. Observe that if $G$ has minimum degree at least~$2$ or if $G$ is bipartite, then we are done by Theorems~\ref{The:ID5/7delta} and~\ref{TheShift}. Hence, we may assume that there exists a vertex $v\in V(G)$ such that $v$ belongs to a cycle or there are two disjoint paths from $v$ to two cycles, and there is a cut-edge $vu$ such that $G-vu$ is disconnected and one of the components is a tree $T_u$ on $n_u\geq 1$ vertices which contains vertex $u$. We will perform a case analysis based on the structure of this tree and the surroundings of $v$. The basic idea is to apply Theorem~\ref{TheShift} on $T_u$ and use the induction hypothesis on $G_u=G-T_u$. Let us denote by $T_v$ the tree $G[V(T_u)\cup\{v\}]$, $G_v=G-T_v$ and let $C_u$  and $C_v$ be an optimal identifying code in $G_u$ and $G_v$, respectively. 

We will use the following observations throughout this proof. (i) If a non-codeword is dominated by two codewords, then it has a unique $I$-set, since $G$ has girth at least~5. Indeed, if any other vertex has those two codewords in its neighbourhood, then we have a 4-cycle. (ii) Similarly, we notice that if a codeword has three or more vertices in its $I$-set, then this $I$-set is unique.

We will next consider five cases.

\medskip

\noindent\textbf{Case 1.} Let us assume that $T_u=K_1$, that is, it is a single leaf-vertex $u$ attached to $v$. Moreover, let us assume that $v\not\in C_u$. We have one vertex less and one leaf less, thus, by the induction hypothesis, $|C_u|\leq (n-1-(\ell(G)-1))5/7+\ell(G)-1=(n-\ell(G))5/7+\ell(G)-1$ and hence, $C= C_u\cup\{u\}$ has cardinality at most $(n-\ell(G))5/7+\ell(G)$. Moreover, $C$ is an identifying code of $G$ since $I(C;u)=\{u\}$ and if $I(x)=\{u\}$, then $x$ is not dominated by $C_u$, a contradiction. If $v\in C_u$ and $I(C_u;v)\neq\{v\}$, then we may again consider code $C=C_u\cup\{u\}$. If $I(C_u;v)=\{v\}$, then $v$ has a non-codeword neighbour  $v'$ in $G_u$. Now, instead of $u$, we add $v'$ to code $C_u$, that is, $C=C_u\cup\{v'\}$. Observe that since $C_u$ is an identifying code in $G_u$, we have $|I(C;v')|\geq3$ in $G$ and thus, $u$ is the only vertex with $I(u)=\{v\}$ and thus, it is separated from all other vertices. All other vertex pairs are separated by the vertices of $C_u$.

\medskip

\noindent\textbf{Case 2.} Let us now assume that $T_u$ is a star $K_{1,n_u-1}$ where $n_u\geq3$. Let us assume first that $u$ is the central vertex of the star. In this case, $\ell(G)=\ell(G_u)+n_u-1$. Thus, we may just add all the $n_u-1$ leaves of $T_u$ to $C_u$ and the resulting code is an identifying code of the claimed cardinality. If $u$ is one of the leaves and $u'$ is the central vertex, then $\ell(G)=\ell(G_u)+n_u-2$. However, we also increase the number of non-leaf vertices by 2 when we transform $G_u$ into $G$ and hence, we may add $\lfloor n_u-2+10/7\rfloor$, that is $n_u-1=\ell(T_u)$, codewords to $C_u$. If $n_u\geq4$ ($T_u$ is not a path), then we may add to $C_u$ each vertex in $V(T_u)$ except $u$ to form an identifying code. If $n_u=3$, then $T_u$ is the path $P_3$. If $v\in C_u$, then we add vertices $u$ and $u'$ to $C_u$ and if $v\not\in C_u$, then we add the two leaves  of $T_u$ to $C_u$. In both cases we obtain an identifying code of the claimed size.

\medskip

\noindent\textbf{Case 3.} Let us now assume that $n_u-\ell(T_u)\geq2$ and $\ID(T_u)\leq\lfloor\frac{n_u-\ell(T_u)}{2}\rfloor+\ell(T_u)-1$. Let us denote by $C'_u$ the optimal identifying code in $T_u$. If $u\not\in C'_u$ or $v\not\in C_u$, then we may just consider $C=C_u\cup C'_u$ as the identifying code and we are done. If $v\in C_u$ and $u\in C'_u$, then we may be required to do some modifications to the code as we may have $I(v)=I(u)$. Since we have $\ell (G)\geq\ell(G_u)+\ell(T_u)-1$ depending on whether $u\in L(T_u)$, we may just add some codeword $u'$ adjacent to $u$ and we get an identifying code $C_u\cup C'_u\cup\{u'\}$ of claimed cardinality. Indeed, we have

\begin{align*}
&5\frac{n-\ell(G)}{7}+\ell(G)\\
\geq&5\frac{(n-n_u)-\ell(G_u)+n_u+1-\ell(T_u)}{7}+\ell(G_u)+\ell(T_u)-1\\
\geq&|C_u|+\frac{n_u-\ell(T_u)}{2}+\ell(T_u)-1+\frac{3(n_u-\ell(T_u))+10}{14}\\
>&|C_u|+|C_u'|+1.
\end{align*}

In the first inequality we use $\ell(G)\geq \ell(G_u)+\ell(T_u)-1$. We can do this since $5\frac{n-\ell(G)}{7}+\ell(G)=\frac{5n+2\ell(G)}{7}$. In the second inequality, we use the induction hypothesis with $|C_u|\leq 5\frac{n-n_u-\ell(G_u)}{7}+\ell(G_u)$. In the third inequality, we use our assumption $|C_u'|\leq \ID(T_u)\leq\lfloor\frac{n_u-\ell(T_u)}{2}\rfloor+\ell(T_u)-1$ and the assumption $n_u-\ell(T_u)\geq2$ to show that $\frac{3(n_u-\ell(T_u))+10}{14}>1.$ Hence, we may add the new codeword adjacent to $u$ and we are done.

\medskip

\noindent\textbf{Case 4.} Let us now assume that $n_u-\ell(T_u)\geq2$ and $\ID(T_u)=\lfloor\frac{n_u-\ell(T_u)}{2}\rfloor+\ell(T_u)$. Notice that in this case Theorem~\ref{TheShift} provides a tight bound and we may assume that identifying code $C'_u$ of $T_u$ has the structure provided by the proof. In particular, we may assume that for any non-codeword $w\in V(T_u)\setminus L(T_u)$, we have $|I(w)|\geq2$. Moreover, we notice that Theorem~\ref{TheShift} actually offers two identifying codes with cardinalities $\lfloor\frac{n_u-\ell(T_u)}{2}\rfloor+\ell(T_u)$ and $\lceil\frac{n_u-\ell(T_u)}{2}\rceil+\ell(T_u)$. Moreover, if a vertex $w$ is a non-codeword vertex in one these codes, then it is a codeword in the other one, and if $w$ is a codeword in both of them, then it has an adjacent codeword in at least one of these codes.

Let us assume first that $v\not\in C_u$ or $u\not\in C'_u$. Then we may consider the identifying code $C=C_u\cup C'_u$. Since $n_u-\ell(T_u)\geq2$, this code has the desired cardinality. Thus, we may assume from now on that $u\in C'_u$ and $v\in C_u$. We further split this case based on whether $u\in L(T_u)$ (and the parity of $n_u-\ell(T_u)$).

\medskip
\noindent\textbf{Subcase 4.1: \boldmath{$u\not\in L(T_u)$}.} In this case, if $n_u-\ell(T_u)$ is even, Theorem~\ref{TheShift} offers two identifying codes with equal cardinalities and we may choose the code in which $u$ is either a non-codeword or has adjacent codeword(s) and we are done. 

When $n_u-\ell(T_u)$ is odd, we have to do some calculations since the two codes have cardinalities $\frac{n_u-1-\ell(T_u)}{2}+\ell(T_u)$ and $\frac{n_u+1-\ell(T_u)}{2}+\ell(T_u)$, respectively. However, even the larger of these two codes is small enough. Indeed, $\frac{n_u+1-\ell(T_u)}{2}+\ell(T_u)+5\frac{n-n_u-\ell(G_u)}{7}+\ell(G_u)=5\frac{n-\ell(G)}{7}+\ell(G)+\frac{3(\ell(T_u)-n_u)+7}{14}.$ Since $n_u-\ell(T_u)$ is odd and at least 3, the last sum term is negative and hence, we may use either of the two codes also in this case. Notice that since $u\not\in L(T_u)$, we used $\ell(G)=\ell(G_u)+\ell(T_u)$.

\medskip
\noindent\textbf{Subcase 4.2: \boldmath{$u\in L(T_u)$}.} 
In this case, we have $\ell(G)=\ell(G_u)+\ell(T_u)-1$. Moreover, we have $\lfloor\frac{n_u-\ell(T_u)}{2}\rfloor+\ell(T_u)\leq \lfloor5\frac{n_u+1-\ell(T_u)}{7}\rfloor+\ell(T_u)-1$ when $n_u-\ell(T_u)\geq2$. Thus, code $C_u\cup C'_u$ has the claimed cardinality. Let $u'$ be the support vertex adjacent to $u$ in $T_u$. If $u'\in C'_u$, then we may use $C_u\cup C'_u$ as our identifying code. Hence, let us assume that $u'\not\in C'_u$. Notice that if $u'$ has an adjacent leaf other than $u$, then by the construction of $C'_u$, $|I(u')|\geq3$ and we can shift the codeword in $u$ to $u'$. In fact, each neighbour of $u'$ is a codeword in $C'_u$ since the structure of $C'_u$ is as in the proof of Theorem $6$ and hence, we may assume from now on that $\deg(u')=2$.

If $n_u-\ell(T_u)$ is even, then there is also another identifying code of the same cardinality as $C'_u$ in $T_u$ by the proof of Theorem~\ref{TheShift}. Moreover, this other code will have $u'$ as a codeword. 

If $n_u-\ell(T_u)$ is odd, consider the tree $T'=T_u-u$. We have $\ell(T_u)=\ell(T')$, since $\deg(u')=2$, and hence, $T'$ has an even number of non-leaves. If the optimal identifying code in $T'$ has cardinality $\frac{n_u-1-\ell(T_u)}{2}+\ell(T_u)$, then there exist two codes of this size and in one of the codes, let us say in $C_{T'}$, $u'$ is a codeword. Now $C_u\cup C_{T'}$ is an identifying code of $G$. On the other hand, if $T'$ has a smaller identifying code $C'_{T'}$, then $C_u\cup C'_{T'}\cup\{u'\}$ is an identifying code of $G$ of the correct size.

\medskip

\noindent\textbf{Case 5.} We are left with the case where $T_u$ is a path on two vertices. By applying the previous cases if possible, we may assume that each leaf of $G$ is part of a $P_2$ which is joined with a single edge to some vertex similar to $v$, belonging to a cycle or connected to at least two cycles. In particular, every support vertex has degree~2. Since $P_2$ is not identifiable, $C'_u$ does not exist. Let $u'$ be the leaf neighbour of $u$ in $T_u$. 
Observe that if $v\in C_u$, then we can consider the code $C_u\cup\{u'\}$, which is identifying and of the correct size. Thus, we assume that $v\not\in C_u$. In particular, if multiple $P_2$'s are connected to $v$, $v$ is forced to be a codeword in $C_u$ to separate a leaf and its support vertex, and we are done. Hence, we may assume that there is at most one support vertex adjacent to $v$ in $G$. This implies that $G_v$ is identifiable. We further split this case based on the number of leaves in $G_v$.

If we have $\ell(G_v)<\ell(G)$, that is, $\ell(G_v)=\ell(G)-1$, then we may consider the graphs $G_v$ and the path on three vertices formed by $v,u$ and $u'$. If $C_v\cap N(v)=\emptyset$, then we may consider code $C=C_v\cup \{v,u'\}$ and if $C_v\cap N(v)\neq\emptyset$, then we may consider code $C=C_v\cup \{v,u\}$. Both of these codes have the claimed cardinality and they are identifying codes.

For the rest of the proof we may assume that $\ell(G_v)=\ell(G)$. In other words, exactly one vertex in $N_{G}(v)$ is a leaf in $G_v$, that is, it has degree~2 in $G$. Let us denote this vertex by $v_1$ and by $v'_1$ the second neighbour of $v_1$. If $\deg(v'_1)\geq3$, then $\ell(G_v-v_1)=\ell(G_v)-1=\ell(G)-1$. Let $C_{v_1}$ be an optimal identifying code in $G_v-v_1$ (note that this graph is identifiable). By induction we have $|C_{v_1}|\leq 5(n-4-\ell(G_v)+1)/7+\ell(G_v)-1=5(n-3-\ell(G))/7+\ell(G)-1$. We may now consider the code $C=C_{v_1}\cup \{v,u,u'\}$. This code has cardinality at most $5(n-3-\ell(G))/7+\ell(G)-1+3< 5(n-\ell(G))/7+\ell(G)$ and is an identifying code. Thus, we may assume that $\deg(v'_1)=2$. Moreover, let us denote by $C_{v'_1}$ the optimal identifying code in $G_{v'_1}=G_v-v_1-v'_1$ (note that this graph is also identifiable). Observe that $\ell(G_{v'_1})\leq\ell(G_v)$. We now consider the code $C=C_{v'_1}\cup\{u,v,v_1\}$, which is identifying. It has cardinality $|C|\leq 5(n-5-\ell(G_{v'_1}))/7+\ell(G_{v'_1})+3\leq 5(n-\ell(G_{v}))/7+\ell(G_{v})=5(n-\ell(G))/7+\ell(G)$.

For the rest of the proof we may assume that in $G_v$ we have leaves $v_1$ and $v_2$ which are of degree~2 and adjacent to $v$ in $G$. 
We know that $\deg(v)\geq3$ in $G$. Let us construct tree $T^*_v$ by starting from $T_v$ and adding to $T_v$ vertex $v_1$, and iteratively, any vertex of degree~2 adjacent to the previous vertex. We do this until we reach a vertex (denoted by $w$) that does not have degree~2. Let us denote by $x_t$ the last vertex we add to $T^*_v$ in this way. 
It is possible that $w=v$, if we have a suitable cycle in $G$. In that case, if $t>1$, we do not include the final edge between $v$ and $x_t$. We denote by $P_t$ the path from $v_1$ to the leaf $x_t$ in $T^*_v$ that is adjacent to $w$ in $G$. Let $P_t$ be the path on vertices $x_1,x_2,\dots, x_t$ where consecutive vertices are adjacent and $x_1=v_1$, and let us denote graph $G_P=G-P_t$ with an optimal identifying code $C_P$ (note that this graph is identifiable). Notice that $|V(G_P)|\geq3$ since it at least  contains vertices $v, u$ and $u'$. Moreover, if $|V(G_P)|=3$, then $G$ consists of a cycle, leaf and a support vertex. Now we may choose some optimal identifying code for the cycle so that $v$ is a codeword which gives us a case which we have already considered. Hence,  $|V(G_P)|>3$ and $\ell(G_P)=\ell(G)$.

\medskip

Let us further split this case into five subcases, based on the value of $t$. 

\medskip

\noindent\textbf{Subcase 5.1: \boldmath{$t\geq 5$}.} 
Recall that $\ID(P_t)=\lceil (t+1)/2\rceil$ since $t\geq5$,~\cite{BCHL04}. Notice that  $v\in C_P$ as it is the only vertex which can separate $u$ and $u'$. Assume first that $t$ is even. In that case $|C_P|+\ID(P_t)\leq 5(n-t-\ell(G_P))/7+\ell(G_P)+t/2+1=5(n-\ell(G))/7+\ell(G)+(14-3t)/14$. We will use code $\{x_2,x_4,\dots,x_{t-2},x_{t-1},x_t\}$ for the path. Together with $C_P$ it has the claimed cardinality and is identifying in $G$ (notice that $I(x_1)=\{v,x_2\}$).

Let us then consider the case where $t\geq5$ is odd. As above, we get that $|C_P|+\ID(P_t)\leq 5(n-t-\ell(G_P))/7+\ell(G_P)+(t+1)/2=5(n-\ell(G))/7+\ell(G)+(7-3t)/14$. We may use set $\{x_2,x_4,\dots,x_{t-3},x_{t-2},x_{t-1}\}$ together with $C_P$.  Therefore, we may now assume that $t\leq4$. 

\medskip

\noindent\textbf{Subcase 5.2: \boldmath{$t=4$}.} When $t=4$, we can add two codewords to $C_P$ to form a code in $G$ since $4\cdot5/7\geq2$ and $\ell(G)=\ell(G_P)$. Recall that $v\in C_P$ and that $x_1=v_1$. If $w\not\in C_P$, we add $x_2,x_4$ on $P_t$ to $C_P$. If $w\in C_P$, then, if $u'\in C_P$, we shift it to $u$ and after that add codewords $x_1,x_3$. Notice that to dominate $u'$ at least one of $u$ and $u'$ is a codeword of $C_P$.

\medskip

\noindent\textbf{Subcase 5.3: \boldmath{$t=3$}.} When $t=3$, we can again add two codewords since $3\cdot 5/7\geq2$. The codewords we add are $x_1$ and $x_2$.

\medskip

\noindent\textbf{Subcase 5.4: \boldmath{$t=2$}.} When $t=2$, we can add at most one codeword. Again we shift the codeword possibly in $u'$ to $u$ and after that add vertex $x_1$ as a codeword. 

\medskip

\noindent\textbf{Subcase 5.5: \boldmath{$t=1$}.} 
Consider the graph $G^*=G-v_1-u'$ (which is identifiable) with optimal identifying code $C^*$. We have $\ell(G)=\ell(G^*)$ since $v_1$ has degree~2 and has no adjacent vertices of degree~2. Hence, if we can construct an identifying code $C$ for graph $G$ by adding at most one codeword to $C^*$, then $C$ has the claimed cardinality. Observe that at least one of $u$ and $v$ belongs to code $C^*$ to dominate $u$. Moreover, we have $|I(v)|\geq2$ as one vertex is needed to separate $u,v$.

Let us assume first that $v\not\in C^*$ and $u\in C^*$. In that case, we can consider code $C^*\cup\{v\}$ for $G$. Now $I(u')=\{u\}$ and if $I(v_1)=I(z)$, then $v\in I(z)$ and hence, $|I(z)|\geq2$ since $z$ was dominated by $C^*$. Since $z$ is $2$-dominated, $v_1$ and $z$ are separated. Thus, $C$ is an identifying code.

Consider then the case with $v,u\in C^*$. In this case, we consider code $C=C^*\cup\{v_1\}$. Clearly $I(u')$ is unique  and $v_1$ and $v$ are the only vertices with $v_1$ and $v$ in their $I$-sets. Since $u\in I(v)$, also $I(v_1)$ is unique. Hence, $C$ is an identifying code.

Finally, we are left with the case $v\in C^*$ and $u\not \in C^*$. We consider the code $C=C^*\cup\{u\}$. Again $I(u')$ is unique. Moreover, in $G^*$ vertex $u$ is the only vertex whose $I$-set is $\{v\}$. Thus, if $I(v_1)=\{v\}$, then $v_1$ is separated from every other vertex and if $|I(v_1)|\geq2$, then $I(v_1)$ is clearly unique. Thus, $C$ is an identifying code of claimed cardinality in $G$. As this was final case, we have now proven the claim.\end{proof}

In~\cite{BFH15}, the authors have constructed an infinite family of connected graphs without leaves which have girth at least~5 and $\ID(G)=3(n-1)/5$, where $n$ is the order. To date, this infinite family of graphs features the largest known ratio between $\ID(G)$ and the number of vertices, among graphs without leaves and with girth at least $5$ (apart from some small examples such as the $7$-cycle). The interest to these kinds of constructions is due to the fact that Theorem~\ref{The:ID5/7delta} is tight only for the $7$-cycle and so, perhaps there exists a way to improve the bound for connected graphs by excluding the $7$-cycle as a single exception. New constructions which increase the ratio $\ID(G)/n$ give new limits to how much the bound of Theorem~\ref{The:ID5/7delta} could possibly be improved. In the following proposition, we give a new infinite family of such graphs which offers the largest known ratio for $\ID(G)/n$ for graphs of girth at least $5$ with no leaves.

\begin{figure}[h]
\centering
\begin{tikzpicture}

\draw[thick, dotted] (-0.5,6) -- (3.5,6);
\draw[thick, dotted] (0.5,4) -- (2.5,4);

\node[main node](x) at (0,8)    {};

\node[code node](1) at (-4,7)    {$x_1$};
\node[code node](2) at (-1,7)    {$x_2$};
\node[code node](3) at (4,7)    {$x_k$};

\node[code node](10) at (-4,6)    {$v_1$};
\node[code node](11) at (-3,5)    {};
\node[main node](12) at (-3,4)    {};
\node[code node](13) at (-3,3)    {};

\node[main node](16) at (-5,5)    {};
\node[code node](15) at (-5,4)    {};
\node[main node](14) at (-5,3)    {};

\node[code node](20) at (-1,6)    {$v_2$};
\node[code node](21) at (0,5)    {};
\node[main node](22) at (0,4)    {};
\node[code node](23) at (0,3)    {};

\node[main node](26) at (-2,5)    {};
\node[code node](25) at (-2,4)    {};
\node[main node](24) at (-2,3)    {};

\node[code node](30) at (4,6)    {$v_k$};
\node[code node](31) at (3,5)    {};
\node[main node](32) at (3,4)    {};
\node[code node](33) at (3,3)    {};

\node[main node](36) at (5,5)    {};
\node[code node](35) at (5,4)    {};
\node[main node](34) at (5,3)    {};

\path[draw,thick]
    (x) edge node {} (1)
    (x) edge node {} (2)
    (x) edge node {} (3)
    (1) edge node {} (10)
    (10) edge node {} (11)
    (11) edge node {} (12)
    (12) edge node {} (13)
    (13) edge node {} (14)
    (14) edge node {} (15)
    (15) edge node {} (16)
    (16) edge node {} (10)
    
    (2) edge node {} (20)
    (20) edge node {} (21)
    (21) edge node {} (22)
    (22) edge node {} (23)
    (23) edge node {} (24)
    (24) edge node {} (25)
    (25) edge node {} (26)
    (26) edge node {} (20)
    
    (3) edge node {} (30)
    (30) edge node {} (31)
    (31) edge node {} (32)
    (32) edge node {} (33)
    (33) edge node {} (34)
    (34) edge node {} (35)
    (35) edge node {} (36)
    (36) edge node {} (30)

    ;

\end{tikzpicture}\centering
\caption{Graph $G$ of girth $7$ with no leaves, on $8k+1$ vertices with $\ID(G)=5k$. Gray vertices form an optimal identifying code.}\label{5/8Example}
\end{figure}
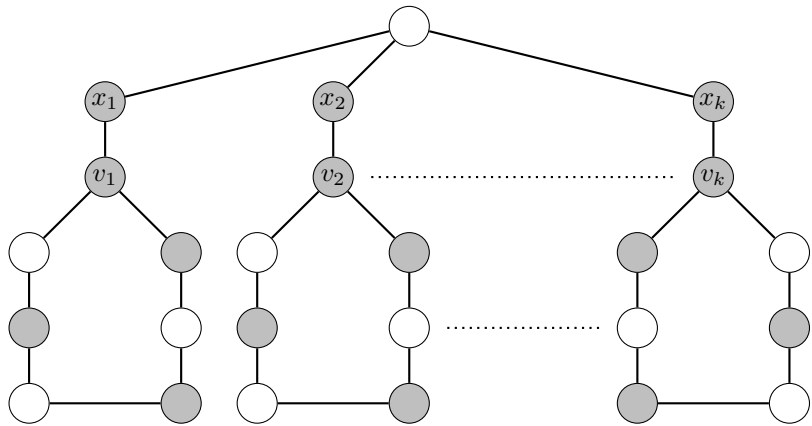

\begin{proposition}\label{Prop5/8}
For each integer $k\geq1$, there exists a connected graph $G$ on $n=8k+1$ vertices with $\ID(G)=5k=\frac{5(n-1)}{8}$.
\end{proposition}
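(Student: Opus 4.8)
The plan is to exhibit the explicit family drawn in Figure~\ref{5/8Example} and verify both the identifying-code bound and the optimality directly from its structure. Concretely, for each $k\geq1$ I would build $G$ from a central vertex $x$ with $k$ pendant paths of length two (call their middle vertices $x_1,\dots,x_k$ and their far endpoints $v_1,\dots,v_k$), and attach to each $v_i$ a copy of the $7$-cycle $C_7$ so that $v_i$ lies on the cycle. Counting vertices gives $1 + k + k\cdot 7 = 8k+1 = n$, and one checks the girth is $7$ (the central gadget creates no short cycle since the $x_i$ are pendant, and each attached cycle has length $7$), so $G$ is a connected graph of girth at least $5$ with no leaves, as required.

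For the upper bound $\ID(G)\leq 5k$ I would simply display an identifying code of that size, namely the gray vertices of Figure~\ref{5/8Example}: take $\{x_1,\dots,x_k\}$ together with, on each attached $7$-cycle, the $4$ codewords shown (giving $5$ codewords per ``arm'', i.e. per index $i$, hence $5k$ in total, with $x$ itself left out). I would verify domination and separation locally: inside each $C_7$ the chosen four vertices form an identifying code of that cycle restricted to its own vertices (recall $\ID(C_7)=5$, but here the extra separating help comes from $v_i$ being attached to $x_i$), and the spokes $x_i x$ and $x_i v_i$ take care of $x$ and the $x_i$. The only cross-arm checks needed are that $x$, the various $x_i$, and the various $v_i$ receive pairwise distinct $I$-sets, which follows because $x_i\in I(x_j)$ iff $i=j$ and because each $x_i$ and each $v_i$ sees its own private arm-codewords.

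The substantive half is the matching lower bound $\ID(G)\geq 5k$, and this is where the real work lies. The strategy is a per-arm counting argument: I would show that any identifying code must spend at least $5$ codewords ``on or adjacent to'' each of the $k$ arms, with the contributions being disjoint across arms. For a fixed arm $i$, the attached $C_7$ is an isometric induced $7$-cycle whose only outside connection is through $v_i\to x_i\to x$; since within such a cycle two adjacent vertices of degree $2$ must be separated and dominated, a local argument (mimicking the proof that $\ID(C_7)=5$, but accounting for the single attachment point $v_i$) forces at least $5$ codewords among the cycle vertices plus possibly $x_i$. The main obstacle is bookkeeping the boundary vertices $v_i,x_i,x$ so that no codeword is double-counted between two arms and so that the separation constraints involving $x$ and the $x_i$ are charged correctly; I expect the cleanest route is to argue that each arm forces $5$ codewords strictly inside its own $7$-cycle (so the counts are automatically disjoint) and then treat any savings at $x,x_1,\dots,x_k$ separately, showing they cannot bring the total below $5k$.

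Summing the per-arm lower bound over all $k$ arms yields $\ID(G)\geq 5k$, which combined with the explicit code gives $\ID(G)=5k=\tfrac{5(n-1)}{8}$, completing the proof.
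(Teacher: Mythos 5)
Your graph and your $5k$-vertex code are exactly those of the paper (a star $K_{1,k}$ with a $7$-cycle attached by an edge to each leaf; per arm, the code takes $x_i$, $v_i$, one cycle-neighbour of $v_i$, and two further cycle vertices whose $I$-sets are singletons), and your verification of the upper bound $\ID(G)\leq 5k$ is essentially fine. The genuine gap is the lower bound, which you yourself call ``the substantive half'' but never actually carry out. Worse, the route you single out as cleanest --- proving that every identifying code contains at least $5$ codewords \emph{strictly inside} each $7$-cycle --- is provably impossible: the optimal code you have just exhibited contains only $4$ codewords inside each cycle, the fifth per-arm codeword being $x_i$. So that version of the per-arm bound is false, and there are no separate ``savings at $x,x_1,\dots,x_k$'' to analyse afterwards; a code attaining $5k$ genuinely spends one of its five per-arm codewords at $x_i$.

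What you need is your other formulation, which is the one the paper proves: every identifying code contains at least $5$ vertices among the $8$ vertices consisting of $x_i$ and its attached cycle. Your double-counting worry is a non-issue here, because these $8$-vertex sets are pairwise disjoint (only the centre $x$ is shared between arms, and it belongs to none of the sets), so summing over $i$ gives $\ID(G)\geq 5k$ immediately. But this per-arm bound is not a routine imitation of $\ID(C_7)=5$: the delicate case is $x_i\in C$ with only three codewords $C'$ on the cycle, and it needs a real argument. The paper's runs as follows: if $I(v_i)=\{x_i\}$, then $C'$ would be an identifying code of $C_7-v_i\cong P_6$, contradicting $\ID(P_6)=4$; otherwise $C'$ dominates the cycle, every pair not separated by $C'$ must involve $v_i$ (since $x_i$ can only separate pairs containing $v_i$), so there is a unique unseparated pair $\{v_i,w\}$, and adding any $u\in N[w]\setminus N[v_i]$ to $C'$ would produce an identifying code of $C_7$ of size $4$, contradicting $\ID(C_7)=5$. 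Without this argument (or a substitute for it), your proposal establishes only the upper bound, not the equality claimed in the proposition.
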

\begin{proof}
Let $k\geq1$ be an integer and let us construct graph $G$ by taking a star $K_{1,k}$ and by attaching a unique $7$-cycle to each leaf of the star with a single edge. The resulting graph has $k+1+7k=8k+1$ vertices. Let us denote the leaves of $K_{1,k}$ by $x_1,\dots, x_k$ and vertices in the $7$-cycles adjacent to them by $v_1,\dots, v_k$. Graph $G$ is illustrated in Figure \ref{5/8Example}.

Let us now consider the identifying code number of $G$. Let $C$ be an optimal identifying code. We claim that there are at least five code vertices among the eight vertices in $x_i$ and the $7$-cycle attached to it.
Recall that we have $\ID(C_7)=5$. Thus, if $x_i\not\in C$, then we have at least five code vertices in the $7$-cycle. Assume then that $x_i\in C$ and that there exists a set $C'$ of three vertices in the $7$-cycle such that $C'\cup\{x_i\}$ dominates and distinguishes all of these vertices. If $I(v_i)=\{x_i\}$, then $C'$ is an identifying code for $C_7-v_i$. However, $C_7-v_i$ is a $6$-path $P_6$ and we have $\ID(P_6)=4$, a contradiction. Hence, $C'$ is a dominating set of $C_7$. Since $C'\cup\{x_i\}$ distinguishes every vertex in $C_7$ and $\ID(C_7)=5$, we have $N[v_i]\cap C'=N[w]\cap C'$ for some $w\in V(C_7)$ and there are no other vertices with the same $I$-sets. Let $u$ be a vertex in $N[w]\setminus N[v_i]$. Now $C'\cup \{u\}$ is an identifying code of size $4$ in $C_7$, a contradiction. Thus, $\ID(G)\geq 5k$.

Finally, we show that $\ID(G)\leq 5k$. To construct an identifying code $C$, we choose vertices $x_i$ and $v_i$ for each $i$, and for each vertex $v_i$ we choose one adjacent vertex $w_i$ in the cycle and then we choose two additional code vertices $u_1$ and $u_2$ in each cycle so that $I(u_i)=\{u_i\}$. This code is depicted with gray vertices in Figure~\ref{5/8Example}. One can easily check that the code is indeed identifying.
\end{proof}

\section{Concluding remarks}\label{sec:conclu}

We have improved several bounds from the literature, both in terms of the values of the bounds, and/or in terms of the generality of the considered graph classes. Our bounds confirm the known facts that certain structural graph features such as leaves, twins or short cycles are crucial for a graph to have a large identifying code. By considering the number of leaves on graphs other than trees, our bounds enable us to quantify the effect of these structures on the identifying code number.

In Section \ref{SecBounds}, we have given a new tight upper bound $\ID(G)\leq (n+\ell(G))/2$ for bipartite graphs without twins of degree two or greater. We have characterized all twin-free graphs attaining this bound. However, it would be interesting to see a characterization for all graphs attaining the bound.

Our bound $\ID(G)\leq\frac{5n+2\ell(G)}{7}$ from Theorem~\ref{The:ID5/7} for graphs of girth at least~5 is tight for stars, the path $P_4$ and the 7-cycle. However, we do not know of any other tight examples. Perhaps this bound can be extended by considering other structural properties of the graph, to give a bound that is tight for a more diverse class of graphs. Perhaps it can also be improved by excluding the 7-cycle as an exception?  We have shown in Proposition~\ref{Prop5/8} that there are arbitrarily large connected twin-free graphs of girth~7 without leaves with $\ID(G)=5(n-1)/8$, hence such an improved bound could not be less than that. 

More generally, it would be interesting to see whether other bounds can be proved for graphs of larger girth. For example, perhaps a stronger version of the bound $\ID(G)\leq\frac{5n+2\ell(G)}{7}$ (for graphs of girth at least $5$) of Theorem~\ref{The:ID5/7} can be proved for graphs of girth at least~$g$ with $g\geq 9$. As we have $\ID(C_g)=(g+1)/2+1$ for odd $g$, such an upper bound cannot be less than that.

\end{document}